\numberwithin{equation}{section}
\theoremstyle{plain}
\newtheorem{theorem}{Theorem}[section]
\newtheorem{corollary}{Corollary}[section]
\newtheorem{definition}{Definition}[section]
\newtheorem{lemma}[theorem]{Lemma}
\newtheorem{proposition}[theorem]{Proposition}
\newtheorem{remark}[theorem]{Remark}
\newtheorem{problem}[theorem]{Problem}
\newcommand{\beq}{\begin{equation}}
\newcommand{\eeq}{\end{equation}}
\newcommand{\beqs}{\begin{eqnarray*}}
\newcommand{\eeqs}{\end{eqnarray*}}
\newcommand{\beqn}{\begin{eqnarray}}
\newcommand{\eeqn}{\end{eqnarray}}
\newcommand{\beqa}{\begin{array}}
\newcommand{\eeqa}{\end{array}}
\def\phi{\varphi}
\begin{document}
\title[The Christoffel problem]{Smooth solutions to the Christoffel problem in $\mathbb{H}^{n+1}$}

\author{Li Chen}
\address{Faculty of Mathematics and Statistics, Hubei Key Laboratory of Applied Mathematics, Hubei University,  Wuhan 430062, P.R. China}
\email{chernli@163.com}

\keywords{The Christoffel problem, the Nirenberg-Kazdan-Warner problem.}

\subjclass[2010]{Primary 35J96, 52A39; Secondary 53A05.}


\begin{abstract}
The famous Christoffel problem is possibly the oldest problem of prescribed curvatures
for convex hypersurfaces in Euclidean space. Recently, this problem has been
naturally formulated in the context of uniformly $h$-convex hypersurfaces in hyperbolic space
by Espinar-G\'alvez-Mira.

Surprisingly, Espinar-G\'alvez-Mira find that the Christoffel problem in hyperbolic space
is essentially equivalent to the Nirenberg-Kazdan-Warner problem on prescribing
scalar curvature on $\mathbb{S}^n$. This equivalence opens a new door to
study the Nirenberg-Kazdan-Warner problem.

In this paper, we establish a existence of solutions to
the Christoffel problem in hyperbolic space by
proving a full rank theorem.
As a corollary, a existence of solutions to the Nirenberg-Kazdan-Warner problem follows.
\end{abstract}

\maketitle

\baselineskip18pt

\parskip3pt

\section{Introduction}

The famous Christoffel problem is possibly the oldest problem of prescribed curvatures
for convex hypersurfaces in Euclidean space \cite{Chr65}. Recently, this problem has been
naturally formulated in the context of uniformly $h$-convex hypersurfaces in hyperbolic space
by Espinar-G\'alvez-Mira \cite{Esp09}. See Section 2 for a comprehensive introduction.
From PDEs view, the Christoffel problem concerns that  given a positive function $f$
defined on $\mathbb{S}^n$, can one find a uniformly $h$-convex solution to the following equation
\begin{eqnarray}\label{MA}
\Delta\varphi-\frac{n}{2}\frac{|D\varphi|^2}{\varphi}+\frac{n}{2}\Big(\varphi-\frac{1}{\varphi}\Big)=\varphi^{-1}f(x),
\end{eqnarray}
where $D$ and $\Delta$ are the covariant derivative and Laplace operator
on $\mathbb{S}^n$ respectively, and a uniformly $h$-convex solution refers to a solution $\varphi$ with the matrix
\begin{eqnarray*}
U[\varphi]=D^2\varphi-\frac{1}{2}\frac{|D\varphi|^2}{\varphi}I+\frac{1}{2}\Big(\varphi-\frac{1}{\varphi}\Big)I
\end{eqnarray*}
is positive definite everywhere on $\mathbb{S}^n$, where $I$ is the identity matrix.

However, Espinar-G\'alvez-Mira do not provide a solution to the Christoffel problem \eqref{MA}.
We prove a existence of solutions to \eqref{MA}.

\begin{theorem}\label{Main}
For a given smooth and positive function $f$ defined on $\mathbb{S}^n$, if $f$ satisfies

(1) $f$ is an even function, i.e.
$f(x)=f(-x)$ for all $x \in \mathbb{S}^n$,

(2) the matrix
\begin{eqnarray*}
D^2(f^{-1})-|D f^{-1}|I
+\frac{f^{-1}}{\frac{8}{n}\max_{\mathbb{S}^n}f+2}I
\end{eqnarray*}
is positive semi-definite on $\mathbb{S}^n$,

then, there exists a smooth, even and uniformly $h$-convex solution to Problem \eqref{MA}.
\end{theorem}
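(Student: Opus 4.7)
The strategy is the continuity method combined with a constant (``full'') rank theorem for the Christoffel tensor $U[\varphi]$. I would deform $f_0\equiv c$ (a constant, for which $\varphi_0\equiv\sqrt{1+2c/n}$ solves \eqref{MA} explicitly) to $f_1=f$ through a family $f_t$ of positive, even functions satisfying hypothesis (2) uniformly in $t$, and show that the set $T\subset[0,1]$ of values for which \eqref{MA} admits a smooth, even, uniformly $h$-convex solution is open, closed, and contains $0$. Openness follows from the implicit function theorem: at a uniformly $h$-convex solution the linearization of \eqref{MA} is uniformly elliptic, and the potential finite-dimensional kernel -- coming from the horospherical translations of $\mathbb{H}^{n+1}$, which on $\mathbb{S}^n$ correspond to the odd first spherical harmonics -- is eliminated by restricting to the space of even functions.

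For closedness I need uniform a priori $C^{2,\alpha}$ bounds together with a strictly positive lower bound on the eigenvalues of $U[\varphi_t]$. The $C^0$ upper and lower bounds would follow by evaluating \eqref{MA} at extrema of $\varphi$, using the structural bound $\varphi>1$ built into uniform $h$-convexity. A $C^1$ estimate should come from the maximum principle applied to $|D\varphi|^2$, and a one-sided $C^2$ bound (upper bound on the eigenvalues of $U[\varphi]$) from differentiating \eqref{MA} twice and using the maximum principle. The delicate part -- a uniform positive lower bound on $U[\varphi]$, which is precisely what preserves uniform ellipticity along the continuity path -- is deferred to the full rank theorem.

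The heart of the argument is the full rank theorem: if $\varphi$ is an even, uniformly $h$-convex solution to \eqref{MA} with $U[\varphi]\ge 0$ and $\min_{\mathbb{S}^n}\mathrm{rank}\,U[\varphi]=r<n$, derive a contradiction. Following the microscopic convexity principle, I would take $\sigma_{n-r+1}(U[\varphi])$ as the test quantity and show it satisfies a differential inequality of the form $\Delta\sigma_{n-r+1}\le C(\sigma_{n-r+1}+|D\sigma_{n-r+1}|)$; the strong maximum principle then forces the rank to be globally constant, which can then be ruled out to obtain $r=n$. Hypothesis (2), in its precise quantitative form with coefficient $\tfrac{8}{n}\max_{\mathbb{S}^n} f+2$, should enter as exactly what is needed to make the principal ``bad'' third-order remainder in this computation non-negative after the $C^0$ upper bound on $\varphi^2$ has been invoked to convert the constant $\max f$ into the coefficient of the identity term.

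The main obstacle is this full rank theorem. Unlike the classical Christoffel problem in $\mathbb{R}^{n+1}$, where the associated tensor $U=D^{2}h+hI$ is Codazzi, the first-order correction $-\tfrac12|D\varphi|^2\varphi^{-1}I$ in $U[\varphi]$ destroys the Codazzi property, producing additional gradient commutators upon two differentiations of \eqref{MA}. Identifying the sharp structural condition on $f$ that absorbs these commutators -- which is exactly hypothesis (2) with its specific coefficient -- and verifying the positivity of the resulting quadratic form is the technical core of the paper. Once the full rank theorem is established, strict uniform $h$-convexity is transported along the deformation, closing the continuity method and yielding the desired smooth, even, uniformly $h$-convex solution at $t=1$.
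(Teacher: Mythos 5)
Your identification of the technical core---the a priori estimates, the full rank theorem following the Caffarelli--Friedman/Guan--Ma microscopic convexity scheme with $\psi=\sigma_{l+1}(U[\varphi])$ as the test quantity, and the role of hypothesis~(2) in absorbing the non-Codazzi commutator terms after the $C^0$ bound is invoked---agrees with the paper. The genuine gap is in your global continuation. You propose the continuity method and justify openness by the implicit function theorem, asserting that the linearization's kernel at a uniformly $h$-convex solution consists of the odd degree-one spherical harmonics. This holds at the constant solution, where the linearization reduces to $\Delta+n$, but for a general non-constant even solution $\varphi$ along the path the linearized operator is
\begin{equation*}
L_{\varphi}\psi=\Delta\psi-n\,\frac{\langle D\varphi, D\psi\rangle}{\varphi}
+\Big(\frac{n}{2}\frac{|D\varphi|^2}{\varphi^2}+\frac{n}{2}\big(1+\varphi^{-2}\big)+\varphi^{-2}f\Big)\psi,
\end{equation*}
and nothing in your argument excludes a nontrivial even kernel for this operator; a spectral crossing along the path would break openness. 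The paper sidesteps this entirely by using Y.~Y.~Li's degree theory for second-order nonlinear elliptic operators: one only needs invertibility and an eigenvalue count at the explicit constant solution for $t=0$ (where $L_{c}=\Delta+n$ has a single positive eigenvalue $n$ of multiplicity one in the even sector, giving degree $-1$), and homotopy invariance carries the nonzero degree to $t=1$, yielding existence with no injectivity claim at intermediate solutions. A smaller omission: once the strong maximum principle gives $\sigma_{l+1}(U[\varphi])\equiv 0$ in the full rank argument, one must still exclude this globally degenerate solution; you do not say how. The paper does it by the shifted Minkowski-type integral identity \eqref{DE}: with $\sigma_{l+1}\equiv 0$ and $\sigma_l>0$ the right-hand side of \eqref{DE} forces $\frac{|D\varphi|^2}{2\varphi}+\frac{1}{2}(\varphi-\varphi^{-1})\equiv 0$, hence $\varphi\equiv 1$, which contradicts \eqref{MA}. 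Your sketch should either supply an analogue of this identity or explain how constancy of a degenerate rank is ruled out.
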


\begin{remark}
For a given smooth and positive function $h(x)$ defined on $\mathbb{S}^n$, it is easy to see that
$f=(h^{-1}+C)^{-1}$ satisfies Condition (2) in Theorem \ref{Main} for $C$ large enough.
Thus, the class of functions which satisfy Condition (2) is quite large.
\end{remark}

Since the uniformly $h$-convex solution of \eqref{MA} is not its admissible solution, the proof of
Theorem \ref{Main} heavily relies on a full rank theorem (Theorem \ref{FRT}) which is proved
following Guan-Ma's work on the Christoffel-Minkowski problem (see Theorem 1.2 in \cite{GM03}).

Surprisingly, Espinar-G\'alvez-Mira showed that the Christoffel problem in $\mathbb{H}^{n+1}$
is essentially equivalent to the Nirenberg-Kazdan-Warner problem on prescribing
scalar curvature on $\mathbb{S}^n$ (see Theorem 15 in \cite{Esp09} or Section 5 in \cite{And20}).

\begin{problem}[Prescribing Scalar Curvature on $\mathbb{S}^n$]
Let $(\mathbb{S}^n, g_0)$ be the standard $n$-sphere,
one may ask which function $R(x)$ on $\mathbb{S}^n$ is the scalar curvature of a metric $g$ on
$\mathbb{S}^n$ conformally equivalent to $g_0$?
For $n\geq 3$, we write $g=u^{\frac{4}{n-2}}g_0$, the problem is equivalent to finding a solution of
\begin{eqnarray}\label{NI}
-\Delta v+\frac{n(n-2)}{4}v=\frac{n-2}{4(n-1)}R(x)v^{\frac{n+2}{n-2}},
\quad v>0, \quad \mbox{on} \quad \mathbb{S}^n.
\end{eqnarray}
\end{problem}

It is easy to check that if we set $\varphi=v^{-\frac{2}{n-2}}$, then
the Christoffel problem \eqref{MA} becomes the Nirenberg-Kazdan-Warner problem
\eqref{NI} with $R=n(n-1)(f+\frac{1}{2})$.
Thus, as a corollary of Theorem \ref{Main}, we get a existence of solutions to Problem \eqref{NI}.

\begin{corollary}
For a given smooth and positive function $f$ defined on $\mathbb{S}^n$ with $n\geq 3$, if $f$ satisfies
Conditions (1) and (2) in Theorem \ref{Main},
then, there exists a smooth solution to Problem \eqref{NI} with $R=n(n-1)(f+\frac{1}{2})$.
\end{corollary}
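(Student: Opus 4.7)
The plan is to derive the corollary from Theorem~\ref{Main} via the conformal change of variable already indicated in the paragraph preceding the statement. First, I would invoke Theorem~\ref{Main} on the given smooth positive $f$: under Conditions (1) and (2) it produces a smooth, even, uniformly $h$-convex function $\varphi\in C^\infty(\mathbb{S}^n)$ solving \eqref{MA}. Since \eqref{MA} contains $\varphi^{-1}$ and $h$-convexity forces $\varphi>0$ everywhere on $\mathbb{S}^n$, for $n\ge 3$ the substitution $v:=\varphi^{-(n-2)/2}$ defines a smooth, strictly positive function on $\mathbb{S}^n$.

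Second, I would carry out the algebraic verification that this $v$ solves \eqref{NI}. The exponent $\alpha=-\tfrac{2}{n-2}$ in $\varphi=v^{\alpha}$ is chosen precisely so that
\[
\Delta\varphi-\tfrac{n}{2}\tfrac{|D\varphi|^2}{\varphi}=\alpha\, v^{\alpha-1}\Delta v,
\]
with the $|Dv|^2$ cross terms cancelling exactly. Substituting this together with $\varphi-\varphi^{-1}=v^{\alpha}-v^{-\alpha}$ and $\varphi^{-1}f=v^{-\alpha}f$ into \eqref{MA} and multiplying through by a suitable power of $v$ would collect every term into the conformal scalar-curvature form \eqref{NI} with the prescribed right-hand side $\tfrac{n-2}{4(n-1)}R\,v^{(n+2)/(n-2)}$. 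This is the reduction the author has already asserted, and it amounts to routine bookkeeping of exponents, using in particular $v^{(n+2)/(n-2)}=\varphi^{-(n+2)/2}$.

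The hard part has in effect been done already: the a priori estimates, the continuity method, and the full-rank theorem that together yield a strictly $h$-convex positive solution of \eqref{MA} are all absorbed in Theorem~\ref{Main}, while the passage from \eqref{MA} to \eqref{NI} is purely algebraic once such a positive $\varphi$ is in hand. Smoothness and positivity of $v$ would follow from those of $\varphi$, so the only place where a computational slip could occur is in matching the constants in the exponent calculation above; there is no genuine analytic obstruction at this stage.
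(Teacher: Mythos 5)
Your proposal is correct and mirrors the paper's own (very brief) argument exactly: apply Theorem~\ref{Main} to obtain a smooth, even, positive, uniformly $h$-convex $\varphi$ solving \eqref{MA}, set $v=\varphi^{-(n-2)/2}$, and check algebraically that $v$ solves \eqref{NI}; the cancellation of the $|Dv|^2$ terms under the substitution $\varphi=v^{-2/(n-2)}$ is the key observation, and you identify it correctly. One caution, however: if you actually carry out the ``routine bookkeeping of exponents'' that you defer, the constant that comes out is $R=(n-1)(n+2f)=n(n-1)+2(n-1)f$, \emph{not} the $R=n(n-1)\bigl(f+\tfrac12\bigr)$ stated in the Corollary (the two agree only for $n=2$, which is excluded); a quick sanity check with constant $\varphi=c$, so $f=\tfrac n2(c^2-1)$ and $v=c^{-(n-2)/2}$, gives $R=n(n-1)c^2=n(n-1)+2(n-1)f$ and confirms this. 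So the reduction is sound, but the displayed $R$-to-$f$ relation in the paper appears to carry a computational slip that is worth correcting before citing the statement.
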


For a comprehensive survey on the Nirenberg-Kazdan-Warner problem, see \cite{Li02}.

In Sect.2, we will give an introduction to the Christoffel problem in $\mathbb{H}^{n+1}$.
After establishing the a priori estimates for solutions to
the Christoffel problem in $\mathbb{H}^{n+1}$ in Sect.3 and
the full rank theorem in Sect.4, we will use the degree theory to prove Theorem \ref{Main} in Sect.5.

\section{Preliminaries}

\subsection{The Christoffel problem in $\mathbb{H}^{n+1}$}

For a smooth, closed and uniformly convex hypersurface $\Sigma$ in $\mathbb{R}^{n+1}$, the Gauss
map $n: \Sigma\rightarrow \mathbb{S}^n$ is a diffeomorphism.
The famous Christoffel problem concerns that  given a positive function $f$
defined on $\mathbb{S}^n$, can one find
a closed and uniformly convex hypersurface $\Sigma$ in $\mathbb{R}^{n+1}$ with $f$ as the mean of its curvature radii \cite{Chr65}:
\begin{eqnarray*}
\sum_{i=1}^{n}R_i(n^{-1}(x))=f(x), \quad \forall x \in \mathbb{S}^n,
\end{eqnarray*}
where $R_i:=\kappa_{i}^{-1}$
are the curvature radii of the hypersurface and $\kappa_1, ..., \kappa_n$ are the principal curvatures of the hypersurface.
This problem was classically solved in \cite{Fi1, Fi2}.

It is very natural to ask for similar problems above in space forms.
Espinar-G\'alvez-Mira \cite{Esp09} first show that the Christoffel problem can be
naturally formulated in the context of uniformly $h$-convex hypersurfaces in the hyperbolic space.
In order to describe how they formulate this problem, at first, we will briefly describe $h$-convex geometry in hyperbolic space
followed by Sections 2 and 3 in \cite{Esp09} (see also Section 5 in \cite{And20} and Section 2 in \cite{Li-Xu}). Their works deeply declared
interesting formal similarities between the geometry of
$h$-convex domains in hyperbolic space and that of
convex Euclidean bodies.

We shall work
in the hyperboloid model of $\mathbb{H}^{n+1}$. For that, consider the
Minkowski space $\mathbb{R}^{n+1,1}$ with canonical coordinates
$X=(X^1, . . . , X^{n+1}, X^0)$ and the Lorentzian metric
\begin{eqnarray*}
\langle X, X\rangle=\sum_{i=1}^{n+1}(X^i)^2-(X^0)^2.
\end{eqnarray*}
$\mathbb{H}^{n+1}$ is the future time-like hyperboloid in Minkowski
space $\mathbb{R}^{n+1,1}$, i.e.
\begin{eqnarray*}
\mathbb{H}^{n+1}=\Big\{X=(X^1, \cdot\cdot\cdot, X^{n+1}, X^0) \in
\mathbb{R}^{n+1,1}: \langle X, X\rangle=-1, X^0>0\Big\}.
\end{eqnarray*}
Given two points $X$ and $Y$ on $\mathbb{H}^{n+1}$, the geodesic distance between $X$ and $Y$
is denoted by $d_{\mathbb{H}^{n+1}}(X, Y)$. For any $X \in \mathbb{H}^{n+1}$ with
$r=\mathrm{dist}((0, 1), X)$, there exists $\theta \in \mathbb{S}^n$ such that
\begin{eqnarray}\label{X-1}
X=(\sinh r\theta, \cosh r).
\end{eqnarray}

The horospheres are hypersurfaces in $\mathbb{H}^{n+1}$ whose
principal curvatures equal to $1$ everywhere. In the hyperboloid model of $\mathbb{H}^{n+1}$,
they can be parameterized by $\mathbb{S}^n \times \mathbb{R}$
\begin{eqnarray*}
H_{x}(r)=\{X \in \mathbb{H}^{n+1}: \langle X, (x, 1)\rangle=-e^r\},
\end{eqnarray*}
where $x \in \mathbb{S}^n$ and $r \in \mathbb{R}$ represents the signed geodesic distance from the ``north pole" $N =(0, 1)\in
\mathbb{H}^{n+1}$.
The interior of the horosphere is called the horo-ball and we denote by
\begin{eqnarray*}
B_{x}(r)=\{X \in \mathbb{H}^{n+1}: 0>\langle X, (x,
1)\rangle>-e^r\}.
\end{eqnarray*}
If we use the Poincar\'e ball model $\mathbb{B}^{n+1}$ of $\mathbb{H}^{n+1}$, then $B_x(r)$ corresponds to
an $(n+1)$-dimensional ball which tangents to $\partial \mathbb{B}^{n+1}$ at $x$. Furthermore, $B_x(r)$ contains the origin for $r>0$.
\begin{definition}
A compact domain $\Omega\subset \mathbb{H}^{n+1}$ (or its boundary $\partial \Omega$) is horospherically convex
(or $h$-convex for short) if every boundary point $p$ of
$\partial \Omega$ has a supporting horo-ball, i.e. a horo-ball $B$
such that $\Omega\subset \overline{B}$ and $p \in \partial B$. When $\Omega$ is smooth, it is $h$-convex if
and only if the principal curvatures of $\partial \Omega$ are greater
than or equal to $1$.

For a smooth compact domain $\Omega$, we say
$\Omega$ (or $\partial \Omega$) is uniformly $h$-convex if its principal curvatures are greater than $1$.
\end{definition}

\begin{definition}\label{HG}
Let $\Omega\subset \mathbb{H}^{n+1}$ be a $h$-convex compact domain. For each $X \in \partial \Omega$,
$\partial \Omega$ has a supporting horo-ball $B_{x}(r)$
for some $r \in \mathbb{R}$ and $x \in \mathbb{S}^n$. Then the horospherical
Gauss map $G: \partial \Omega\rightarrow S_{\infty}=(\mathbb{S}^n, g_{\infty})$ of $\Omega$ (or $\partial \Omega$)
is defined by
\begin{eqnarray*}
G(X)=x, \quad g_{\infty}(X)=e^{2r}\sigma,
\end{eqnarray*}
where $\sigma$ is the canonical $\mathbb{S}^n$ metric.
\end{definition}

Note that the canonical $\mathbb{S}^n$ metric $\sigma$ is used in order to measure geometric quantities associated to the
Euclidean Gauss map of a hypersurface in $\mathbb{R}^{n+1}$. However, Espinar-G\'alvez-Mira
explained in detail why we use the horospherical metric $g_{\infty}$ on $\mathbb{S}^n$
for measuring geometrical quantities with respect to the horospherical Gauss map (see section 2 in \cite{Esp09}).
Let $M$ be a $h$-convex hypersurface in $\mathbb{H}^{n+1}$.
For each $X \in M$, $M$  has a supporting horo-ball $B_{x}(r)$.
Then, we have (see (5.3) in \cite{And20} or (2.2) in \cite{Li-Xu})
\begin{eqnarray}\label{X-v}
X-\nu=e^{-r}\langle x, 1\rangle,
\end{eqnarray}
where $\nu$ is the unit outward vector of $M$. Differentiating the equation \eqref{X-v} gives
\begin{eqnarray}\label{dG}
\langle d G, d G\rangle_{g_{\infty}}=(\mathcal{W}-I)^2,
\end{eqnarray}
where $\mathcal{W}$ is Weingarten matrix of $M$ and $I$ is the identity matrix. For a smooth and uniformly $h$-convex hypersurface $M\subset \mathbb{H}^{n+1}$, the matrix $\mathcal{W}-I$ is positive definite. Thus, $G$ is a diffeomorphism from $M$ to $\mathbb{S}^n$.

The relation \eqref{dG} motivated Espinar-G\'alvez-Mira to define by the hyperbolic curvature radii (see Definition 8 in \cite{Esp09})
\begin{eqnarray}\label{HCR}
\mathcal{R}_i(p):=\frac{|e_i(p)|}{|d_{e_i}G(p)|}=\frac{1}{\kappa_i(p)-1}, \quad p \in M,
\end{eqnarray}
where $\{e_1(p), ... , e_n(p)\}$ is an orthonormal basis of principal directions of $T_p M$. Thus, Espinar-G\'alvez-Mira \cite{Esp09}
proposed the following Christoffel problem in $\mathbb{H}^{n+1}$.

\begin{problem}[The Christoffel problem]\label{C}
Let $f: \mathbb{S}^n\rightarrow \mathbb{R}$. Find if
there is a smooth, closed and uniformly $h$-convex hypersurface such that for every $x \in \mathbb{S}^n$
\begin{eqnarray*}
\sum_{i=1}^{n}\mathcal{R}_i(G^{-1}(x))=f(x),
\end{eqnarray*}
where $G:\mathbb{S}^n\rightarrow \mathbb{H}^{n+1}$ is the horospherical Gauss map of the uniformly $h$-convex hypersurface
and $\mathcal{R}_1, \mathcal{R}_2, ..., \mathcal{R}_n$ are the hyperbolic curvature radii \eqref{HCR}.
\end{problem}

Using the the horospherical support function of a
$h$-convex hypersurface in $\mathbb{H}^{n+1}$ which will be defined below, we can reduce Problem \ref{C} to solve a Laplace
equation on $\mathbb{S}^n$.
Let $\Omega$ be a $h$-convex compact domain in $\mathbb{H}^{n+1}$. Then
for each $x\in \mathbb{S}^n$ we define the horospherical support function
of $\Omega$ (or $\partial \Omega$) in direction $x$ by
\begin{eqnarray*}
u(x):=\inf\{s \in \mathbb{R}: \Omega\subset \overline{B}_{x}(s)\}.
\end{eqnarray*}
We also have the alternative characterisation
\begin{eqnarray}\label{SD}
u(x)=\sup\{\log(-\langle X, (x, 1)\rangle): X \in \Omega\}.
\end{eqnarray}
The support function completely determines a
$h$-convex compact domain $\Omega$, as an intersection of horo-balls:
\begin{eqnarray*}
\Omega=\bigcap_{x \in \mathbb{S}^n}\overline{B}_{x}(u(x)).
\end{eqnarray*}

Since $G$ is a diffeomorphism from $\partial \Omega$ to $\mathbb{S}^n$ for a compact uniformly $h$-convex domain $\Omega$, then
$\overline{X}=G^{-1}$ is a smooth embedding from $\mathbb{S}^n$ to $\partial \Omega$ and
$\overline{X}$ can be written in terms of the support function $u$, as follows:
\begin{eqnarray}\label{X}
\overline{X}(x)=\frac{1}{2}\varphi(-x, 1)+\frac{1}{2}
\Big(\frac{|D \varphi|^2}{\varphi}+\frac{1}{\varphi}\Big)(x, 1)-(D\varphi, 0),
\end{eqnarray}
where $\varphi=e^u$ and
$D$ is the Levi-Civita connection of the standard metric $\sigma$ of $\mathbb{S}^{n}$. Then,
we have by \eqref{X-v}
\begin{eqnarray}\label{X-v-1}
X-\nu=\frac{1}{\varphi}\langle x, 1\rangle.
\end{eqnarray}
and after choosing normal coordinates around $x$ on $\mathbb{S}^{n}$, we
express the Weingarten matrix in the horospherical support function
(see (1.16) in \cite{And20}, Lemma 2.2 in \cite{Li-Xu})
\begin{eqnarray}\label{w-i}
W-I=\Big(\varphi U[\varphi]\Big)^{-1},
\end{eqnarray}
where
\begin{eqnarray*}
U[\varphi]=D^2\varphi-\frac{1}{2}\frac{|D\varphi|^2}{\varphi}I+\frac{1}{2}\Big(\varphi-\frac{1}{\varphi}\Big)I.
\end{eqnarray*}
Clearly, $\Omega \subset \mathbb{H}^{n+1}$ is  uniformly $h$-convex if and only if
the matrix $U[\varphi]$ is positive definite. Thus, the hyperbolic curvature radii $\mathcal{R}_i$ are
the eigenvalues of the matrix $\varphi U[\varphi]$. So, The Christoffel problem \ref{C} is equivalent to finding a uniformly $h$-convex solution to the equation \eqref{MA}:
\begin{eqnarray*}
\Delta\varphi-\frac{n}{2}\frac{|D\varphi|^2}{\varphi}+\frac{n}{2}\Big(\varphi-\frac{1}{\varphi}\Big)=\varphi^{-1}f(x),
\end{eqnarray*}
where a uniformly $h$-convex solution refers to a solution $\varphi$ with the matrix $U[\varphi]$
is positive definite everywhere on $\mathbb{S}^n$.

\subsection{The elementary symmetric functions}

We will give the definition of the elementary symmetric functions and review their basic properties which could be found in
\cite{L96}.

\begin{definition}
For any $k=1,2,\cdots,n$, we set
\begin{eqnarray}\label{sigma}
\sigma_{k}(\lambda)=\sum\limits_{1\le i_1<i_2<\cdots < i_k\le n}
\lambda_{{i}_{1}}\lambda_{{i}_{2}}\cdots\lambda_{{i}_{k}},
\end{eqnarray}
for any $\lambda=(\lambda_{1},\cdots,\lambda_{n})\in\mathbb{R}^{n}$ and set $\sigma_0(\lambda)=1$.
Let $\lambda_1(A)$, ..., $\lambda_n(A)$ be the eigenvalues of the $n \times n$ symmetric matrix $A$ and
denote by $\lambda(A)=(\lambda_1(A), ..., \lambda_n(A)$. We define by $\sigma_{k}(A)=\sigma_{k}(\lambda(A))$.
\end{definition}

We recall that the Garding's cone is defined as
$$\Gamma_{k}=\{\lambda\in \mathbb{R}^{n}:\sigma_{i}(\lambda)>0,\forall 1\le i\le k\}.$$

\begin{proposition}\label{AA}
Let $A$ be a $n \times n$ symmetric matrix. Then the following relations hold.
\begin{eqnarray*}
\sigma_k(A)=\frac{1}{k!}\sum_{\substack{i_1, ..., i_k=1\\ j_1, ..., j_k=1}}^{n}
\delta(i_1, ..., i_k; j_1, ..., j_k)A_{i_1 j_1}\cdot \cdot \cdot A_{i_k j_k},
\end{eqnarray*}
\begin{eqnarray*}
\sigma_{k}^{\alpha\beta}(A)&:=&\frac{\partial\sigma_{k}}{\partial A_{\alpha\beta}}(A)
\\&=&
\frac{1}{(k-1)!}\sum_{\substack{i_1, ..., i_k=1\\ j_1, ..., j_k=1}}^{n}
\delta(\alpha, i_1, ..., i_{k-1}; \beta, j_1, ..., j_{k-1})A_{i_1 j_1}\cdot \cdot \cdot A_{i_{k-1} j_{k-1}},
\end{eqnarray*}
\begin{eqnarray*}
\sigma_{k}^{\alpha\beta, \mu\nu}(A)&:=&\frac{\partial^2\sigma_{k}}{\partial A_{\alpha\beta}\partial A_{\mu\nu}}(A)
\\&=&
\frac{1}{(k-2)!}\sum_{\substack{i_1, ..., i_k=1\\ j_1, ..., j_k=1}}^{n}
\delta(\alpha, \mu, i_1, ..., i_{k-2}; \beta, \nu, j_1, ..., j_{k-2})A_{i_1 j_1}\cdot \cdot \cdot A_{i_{k-2} j_{k-2}},
\end{eqnarray*}
where the Kronecker symbol $\delta(I; J)$ for indices $I=(i_1, ..., i_m)$ and $J=(j_1, ..., j_m)$ is defined as
\begin{equation*}
\delta(I; J)=
  \begin{cases}
    \displaystyle 1, & \text{if I is an even permutation of J};\\[2.5ex]
    \displaystyle -1, & \text{if I is an odd permutation of J};\\[2.5ex]
    \displaystyle 0, & \text{otherwise}.
  \end{cases}
\end{equation*}
\end{proposition}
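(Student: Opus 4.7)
The plan is to prove the three identities in order, each following from the preceding one by differentiation, after first establishing the formula for $\sigma_k(A)$ via expansion of principal minors.

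First I would recall that the generalized Kronecker delta admits the determinantal description $\delta(I;J)=\det\bigl((\delta_{i_a j_b})_{1\le a,b\le k}\bigr)$, which is nonzero precisely when both $I$ and $J$ are $k$-tuples of distinct indices with $\{i_1,\dots,i_k\}=\{j_1,\dots,j_k\}$, in which case it equals the sign of the unique permutation $\tau\in S_k$ satisfying $i_{\tau(a)}=j_a$. Since $\sigma_k(A)$ equals the sum of all $k\times k$ principal minors of $A$, and each such minor expands as $\det(A_{[p_1,\dots,p_k]})=\sum_{\tau\in S_k}\mathrm{sgn}(\tau)\prod_{a=1}^{k}A_{p_a p_{\tau(a)}}$, I would symmetrize as follows: for any fixed unordered $k$-subset $\{p_1<\cdots<p_k\}$, the ordered tuples $I,J$ with $\{i_a\}=\{j_b\}=\{p_1,\dots,p_k\}$ contribute $(k!)^2$ terms to the right-hand side; a change of summation variable $\sigma=\tau_J\tau_I^{-1}$ collapses one copy of $S_k$ and leaves $k!\cdot\det(A_{[p_1,\dots,p_k]})$, so the prefactor $1/k!$ recovers exactly $\sigma_k(A)$.

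Second, for the derivative formulas I would differentiate the polynomial identity just established. Applying $\partial/\partial A_{\alpha\beta}$ to $A_{i_1 j_1}\cdots A_{i_k j_k}$ produces a sum of $k$ terms, one for each factor being struck. The key observation is that $\delta(i_1,\dots,i_k;j_1,\dots,j_k)$ is invariant under the simultaneous transposition of any two positions in the top and in the bottom tuple, since the two sign changes cancel. Therefore the $l$-th term in the derivative can be relabeled to coincide with the $l=1$ term, giving $k$ identical copies and converting $1/k!$ into $1/(k-1)!$; this yields the formula for $\sigma_k^{\alpha\beta}(A)$. The third identity follows by differentiating once more and invoking exactly the same symmetry argument on the remaining $k-1$ positions to reduce $k-1$ equal contributions to a single one with prefactor $1/(k-2)!$.

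The routine part is the combinatorial bookkeeping with the generalized Kronecker symbol; the mild subtlety I expect to be the principal obstacle is that $\sigma_k$ is used on symmetric matrices while $A_{\alpha\beta}$ and $A_{\beta\alpha}$ are not independent variables. I would handle this by first regarding $A$ as a general matrix with $n^2$ independent entries, so that the three identities become genuine polynomial identities in those entries; the differentiations are then unambiguous, and the specialization to symmetric matrices at the end is legitimate with the standing convention (matching \cite{L96}) that $\sigma_k^{\alpha\beta}$ and $\sigma_k^{\alpha\beta,\mu\nu}$ mean partial derivatives of the polynomial extension evaluated at the given symmetric $A$.
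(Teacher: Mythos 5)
Your proposal is correct. Note, however, that the paper itself offers no proof of Proposition \ref{AA}: it is presented as a review of standard facts about elementary symmetric functions with a citation to \cite{L96}, so there is no in-paper argument to compare yours against. Your route --- expressing $\sigma_k(A)$ as the sum of principal $k\times k$ minors, symmetrizing over ordered index tuples to account for the $1/k!$, and then obtaining the first and second derivative formulas by differentiating the polynomial identity and using the invariance of $\delta(I;J)$ under simultaneous position swaps in the top and bottom tuples to collapse the $k$ (respectively $k-1$) equal terms --- is the standard verification and is sound. You are also right to flag the only genuine subtlety, namely that $A_{\alpha\beta}$ and $A_{\beta\alpha}$ are not independent for symmetric $A$; treating the identities as polynomial identities in $n^2$ independent entries and specializing afterwards is exactly the convention under which $\sigma_k^{\alpha\beta}$ and $\sigma_k^{\alpha\beta,\mu\nu}$ are used later in the paper (e.g.\ in the formulas for $\sigma_{l+1}^{ij}$ and $\sigma_{l+1}^{ij,rs}$ in the proof of the Deformation Lemma), so the specialization is legitimate.
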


For $h$-convex hypersurfaces in $\mathbb{H}^{n+1}$, Andrews-Chen-Wei \cite{And20} introduce
the shifted Weingarten matrix of hypersurfaces
$\widetilde{\mathcal{W}}:=\mathcal{W}-I$ based on
the relation \eqref{dG}.
Let $\tilde{\kappa}=(\tilde{\kappa}_1, . . . , \tilde{\kappa}_n)$ are
eigenvalues of the shifted Weingarten matrix $\widetilde{\mathcal{W}}$, they define the $k$-th shifted mean curvature
\begin{eqnarray*}
\widetilde{H}_k:=\frac{1}{C_{n}^{k}}\sigma_k(\tilde{\kappa}).
\end{eqnarray*}
The geometry and analysis on shifted curvatures of hypersurfaces in hyperbolic space
have been widely studied in \cite{And20, Hu20, Wang20, Hu23} recently.

We recall Lemma 2.6 in \cite{Hu20} which gives the relation of shifted mean curvatures.
\begin{lemma}
Let $M$ be a smooth closed hypersurface in $H^{n+1}$. Then
\begin{eqnarray}\label{QQ}
\int_{M}(\cosh r-\langle V, \nu\rangle) \widetilde{H}_k d\mu=\int_{M}\langle V, \nu\rangle \widetilde{H}_{k+1} d\mu,
\end{eqnarray}
where $r$ is the radial function of $M$, $V=\sinh r\partial_r$ be the conformal Killing vector field
in $\mathbb{H}^{n+1}$, $\nu$ is the unit outward normal vector of $M$ and $d\mu$ is the area element of $M$.
\end{lemma}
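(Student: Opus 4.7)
The plan is to combine the Newton transformation of the shifted Weingarten tensor with the closed conformal Killing property of $V=\sinh r\,\partial_r$. A direct computation in polar coordinates on $\mathbb{H}^{n+1}$ yields
\begin{equation*}
\bar\nabla_X V=\cosh r\cdot X\qquad\text{for every }X\in T\mathbb{H}^{n+1},
\end{equation*}
which is the hyperbolic analogue of the Euclidean identity $\bar\nabla_X X=X$ underlying the classical Minkowski formulas.

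Let $\tilde T_k^{ij}:=\sigma_k^{ij}(\widetilde{\mathcal W})$ denote the $k$-th Newton transformation of the shifted Weingarten tensor on $M$. Because $\mathbb{H}^{n+1}$ has constant sectional curvature, the Codazzi equation $\nabla_i h_{jl}=\nabla_j h_{il}$ holds and is inherited by $\widetilde{\mathcal W}=\mathcal W-I$. The standard divergence-free property of the Newton transformations then gives $\nabla_i\tilde T_k^{ij}=0$ on $M$.

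Next, I would decompose $V=V^T+\langle V,\nu\rangle\nu$ along $M$ and combine the formula for $\bar\nabla V$ with the Weingarten relation to obtain
\begin{equation*}
\nabla_i^M V_j^T=\cosh r\cdot g_{ij}^M-\langle V,\nu\rangle\, h_{ij}.
\end{equation*}
Contracting with $\tilde T_k^{ij}$ and invoking the trace identities
\begin{equation*}
\tilde T_k^{ij}g_{ij}^M=(n-k)\sigma_k(\widetilde{\mathcal W}),\qquad \tilde T_k^{ij}\widetilde{\mathcal W}_{ij}=(k+1)\sigma_{k+1}(\widetilde{\mathcal W}),
\end{equation*}
together with $h_{ij}=\widetilde{\mathcal W}_{ij}+g_{ij}^M$, produces
\begin{equation*}
\operatorname{div}^M\bigl(\tilde T_k(V^T)\bigr)=(n-k)(\cosh r-\langle V,\nu\rangle)\sigma_k(\widetilde{\mathcal W})-(k+1)\langle V,\nu\rangle\,\sigma_{k+1}(\widetilde{\mathcal W}).
\end{equation*}

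Integrating this identity over the closed hypersurface $M$ kills the left-hand side by the divergence theorem, and the combinatorial relation $(n-k)C_n^k=(k+1)C_n^{k+1}$ converts the resulting equation into the claim \eqref{QQ} after substituting $\sigma_k=C_n^k\widetilde H_k$. The only real subtlety lies in verifying $\bar\nabla_X V=\cosh r\cdot X$ and the divergence-free property of $\tilde T_k$; both are standard consequences of, respectively, the constant-curvature structure of $\mathbb{H}^{n+1}$ and the Codazzi equation. Everything else is bookkeeping.
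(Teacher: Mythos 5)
The paper does not actually prove this lemma; it simply cites it as Lemma 2.6 of the reference by Hu, Li and Wei, so there is no in-paper proof to compare against. Your argument, however, is the standard one for Minkowski-type formulas and is correct, and it is almost certainly the same route taken in the cited reference: verify that $V=\sinh r\,\partial_r$ satisfies $\bar\nabla_X V=\cosh r\, X$, split $V$ into tangential and normal parts along $M$ to get $\nabla_i V_j^T=\cosh r\, g_{ij}-\langle V,\nu\rangle h_{ij}$, contract with the divergence-free Newton tensor of the Codazzi tensor $\widetilde{\mathcal W}=\mathcal W-I$, substitute $h_{ij}=\widetilde{\mathcal W}_{ij}+g_{ij}$, integrate, and use $(n-k)C_n^k=(k+1)C_n^{k+1}$. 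All the individual steps check out.

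One small index slip worth fixing: you define $\tilde T_k^{ij}:=\sigma_k^{ij}(\widetilde{\mathcal W})$, but the trace identities you then invoke, $\tilde T_k^{ij}g_{ij}=(n-k)\sigma_k$ and $\tilde T_k^{ij}\widetilde{\mathcal W}_{ij}=(k+1)\sigma_{k+1}$, are the ones for the $k$-th Newton transformation $T_k=\partial\sigma_{k+1}/\partial A$, i.e.\ for $\sigma_{k+1}^{ij}$ and not $\sigma_k^{ij}$. (For $\sigma_k^{ij}$ one would get $(n-k+1)\sigma_{k-1}$ and $k\sigma_k$.) Replacing the defining line by $\tilde T_k^{ij}:=\sigma_{k+1}^{ij}(\widetilde{\mathcal W})$ makes everything consistent; the rest of the computation, the divergence-free property, and the final cancellation of the binomial factors all go through as you wrote them.
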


For uniformly $h$-convex hypersurfaces in $\mathbb{H}^{n+1}$, we can express the equality
\eqref{QQ} by the horospherical support function of hypersurfaces.

\begin{lemma}
Let $M$ be a smooth, closed and uniformly $h$-convex hypersurface in $\mathbb{H}^{n+1}$
and $u=\log \varphi$ be its horospherical support function. Then
\begin{eqnarray}\label{DE}
&&\int_{\mathbb{S}^n}\varphi^{k-n}\sigma_{k+1}(U[\varphi])d\sigma_{\mathbb{S}^n}
\nonumber\\&=&\frac{C_{n}^{k}}{C_{n}^{k+1}}\int_{\mathbb{S}^n}\Big[\frac{|D \varphi|^2}{2\varphi}+\frac{1}{2}(\varphi-\varphi^{-1})\Big]
\varphi^{k-n}\sigma_{k}(U[\varphi])d\sigma_{\mathbb{S}^n},
\end{eqnarray}
where $d\sigma_{\mathbb{S}^n}$ is the area element of $\mathbb{S}^n$.
\end{lemma}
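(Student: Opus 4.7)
The identity \eqref{DE} is the translation of the geometric identity \eqref{QQ} to the sphere via the inverse horospherical Gauss map $\overline{X}=G^{-1}\colon\mathbb{S}^n\to M$ described by \eqref{X}. The plan is to pull back every factor appearing in \eqref{QQ}, express it through $\varphi$ and $U[\varphi]$, and then match indices.

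First, I would extract the scalar weights. The time component of \eqref{X} reads off
\begin{equation*}
\cosh r = X^0 = \tfrac{1}{2}(\varphi+\varphi^{-1})+\tfrac{|D\varphi|^2}{2\varphi}.
\end{equation*}
A direct computation in the hyperboloid model shows $V=\cosh r\cdot X-(0,\ldots,0,1)$; combined with $\langle X,\nu\rangle=0$ and the identity \eqref{X-v-1}, this yields $\langle V,\nu\rangle=\nu^0=\cosh r-\varphi^{-1}$. Consequently $\cosh r-\langle V,\nu\rangle=\varphi^{-1}$ and $\langle V,\nu\rangle=\tfrac{|D\varphi|^2}{2\varphi}+\tfrac{1}{2}(\varphi-\varphi^{-1})$, which are precisely the two scalar factors appearing on the two sides of \eqref{DE}.

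Second, I would rewrite the shifted curvatures together with the area form. By \eqref{w-i} the shifted principal curvatures satisfy $\tilde\kappa_i=1/\mathcal{R}_i$, where $\mathcal{R}_i$ are the eigenvalues of $\varphi U[\varphi]$, so the elementary identity $\sigma_m(\rho_1^{-1},\ldots,\rho_n^{-1})=\sigma_{n-m}(\rho)/\sigma_n(\rho)$ gives
\begin{equation*}
\widetilde{H}_m=\frac{\sigma_{n-m}(U[\varphi])}{C_n^{m}\,\varphi^{m}\,\sigma_n(U[\varphi])}.
\end{equation*}
The area form on $M$, pulled back to $\mathbb{S}^n$ through $G^{-1}$, satisfies $d\mu_M=\sigma_n(U[\varphi])\,d\sigma_{\mathbb{S}^n}$; this is the hyperbolic analogue of the classical Christoffel relation $d\mu=\det(D^2 h+hI)\,d\sigma$ and can be obtained from \eqref{X} and \eqref{w-i} by expanding the induced metric $g_{ij}=\langle\partial_i\overline{X},\partial_j\overline{X}\rangle_{\mathbb{R}^{n+1,1}}$ and identifying its determinant with $\sigma_n(U[\varphi])^2$. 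The two ingredients combine to the clean expression $\widetilde{H}_m\,d\mu_M=\frac{\varphi^{-m}\sigma_{n-m}(U[\varphi])}{C_n^{m}}\,d\sigma_{\mathbb{S}^n}$.

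Finally, plugging this into \eqref{QQ} with the choice $m=n-k-1$ on the $\widetilde{H}_m$ side (so that $n-m=k+1$ and $n-m-1=k$) and using $C_n^{n-j}=C_n^{j}$ lines up the exponent of $\varphi$ on both sides as $\varphi^{k-n}$; clearing the binomial coefficients then produces \eqref{DE}. The only genuinely nontrivial obstacle is the area-element formula $d\mu_M=\sigma_n(U[\varphi])\,d\sigma_{\mathbb{S}^n}$, which underlies essentially every support-function integral identity for $h$-convex hypersurfaces; once it is in place the remainder is a change of variables followed by an index shift.
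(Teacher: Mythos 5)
Your plan is the same as the paper's: express $\cosh r$, $\langle V,\nu\rangle$, and $\widetilde H_m\,d\mu$ in terms of $\varphi$ and $U[\varphi]$, then substitute into \eqref{QQ} and shift indices. The one genuine variation is your derivation of $\langle V,\nu\rangle$: you use the identity $V=\cosh r\cdot X-(0,\dots,0,1)$ together with $\langle X,\nu\rangle=0$ and \eqref{X-v-1} to read off $\langle V,\nu\rangle=\nu^0=\cosh r-\varphi^{-1}$ in one line, whereas the paper computes $\langle V,\nu\rangle=-\sinh r\,\langle\partial_r,X-\nu\rangle$ and then invokes \eqref{SS-1}--\eqref{SS-2}. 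Both give $\langle V,\nu\rangle=\tfrac{|D\varphi|^2}{2\varphi}+\tfrac12(\varphi-\varphi^{-1})$; your version is a little cleaner and also exhibits the complementary weight $\cosh r-\langle V,\nu\rangle=\varphi^{-1}$ without further work. Everything else (the factorization $\widetilde H_m\,d\mu=\tfrac{1}{C_n^m}\varphi^{-m}\sigma_{n-m}(U)\,d\sigma_{\mathbb{S}^n}$, the area-form relation $d\mu=\det U[\varphi]\,d\sigma_{\mathbb{S}^n}$ coming from \eqref{dG} and \eqref{w-i}) matches the paper.

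One caution about the last sentence of your argument: you assert that ``clearing the binomial coefficients then produces \eqref{DE},'' but if you actually carry out the substitution with $m=n-k-1$ you get
\begin{equation*}
\frac{1}{C_n^{m}}\int_{\mathbb{S}^n}\varphi^{k-n}\sigma_{k+1}(U)\,d\sigma_{\mathbb{S}^n}
=\frac{1}{C_n^{m+1}}\int_{\mathbb{S}^n}\langle V,\nu\rangle\,\varphi^{k-n}\sigma_{k}(U)\,d\sigma_{\mathbb{S}^n},
\end{equation*}
and since $C_n^{m}=C_n^{n-k-1}=C_n^{k+1}$, $C_n^{m+1}=C_n^{n-k}=C_n^{k}$, the resulting constant is $C_n^{k+1}/C_n^{k}$, i.e.\ the \emph{reciprocal} of the $C_n^{k}/C_n^{k+1}$ written in \eqref{DE}. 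A quick check on a geodesic sphere ($\varphi\equiv e^\rho$, $U=\sinh\rho\,I$) confirms $C_n^{k+1}/C_n^{k}$ is the correct ratio. This is evidently a typo in \eqref{DE} rather than a flaw in your method (and it is harmless for the only use the paper makes of \eqref{DE}, namely concluding $\varphi\equiv1$ in the proof of Theorem \ref{FRT}), but you should not have waved through ``clearing the binomial coefficients'' without verifying it; doing so would have flagged the discrepancy.
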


\begin{proof}
Comparing \eqref{X-1} with \eqref{X}, we find
\begin{eqnarray}\label{SS-1}
\cosh r=\frac{|D \varphi|^2}{2\varphi}+\frac{1}{2}(\varphi+\varphi^{-1})
\end{eqnarray}
and
\begin{eqnarray}\label{SS-2}
\sinh r\langle \theta, x\rangle=\frac{|D \varphi|^2}{2\varphi}-\frac{1}{2}(\varphi-\varphi^{-1}).
\end{eqnarray}
Note that $\partial_r=\partial_r X=(\cosh r\theta, \sinh r)$ and
$\langle\partial_r, X\rangle=0$. Using \eqref{X-v-1}, we have
\begin{eqnarray*}
\langle X, \nu\rangle=-\sinh r\langle \partial_r, X-\nu\rangle
=\varphi^{-1}(\sinh^2 r-\cosh r \sinh r\langle \theta, x\rangle).
\end{eqnarray*}
Plugging \eqref{SS-1} and \eqref{SS-2} into the above equation, it yields
\begin{eqnarray}\label{SS-3}
\langle V, \nu\rangle=\frac{|D \varphi|^2}{2\varphi}+\frac{1}{2}(\varphi-\varphi^{-1}).
\end{eqnarray}

Combining \eqref{dG} and \eqref{w-i}, we have
\begin{eqnarray*}
d\mu=\det U[\varphi]d\sigma_{\mathbb{S}^n}.
\end{eqnarray*}
Thus,
\begin{eqnarray}\label{SS-4}
\widetilde{H}_k d\mu=\frac{1}{C_{n}^{k}}\varphi^{-k}\sigma_{n-k}(U[\varphi])d\sigma_{\mathbb{S}^n}.
\end{eqnarray}
Inserting \eqref{SS-1}, \eqref{SS-3} and \eqref{SS-4} into \eqref{QQ}, then \eqref{DE} follows.
\end{proof}

\section{The a priori estimates}

For convenience, in the following of this paper, we always assume that $f$ is
a smooth positive, even function on $\mathbb{S}^n$. Let $M$ be the smooth, closed,
origin-symmetric and uniformly $h$-convex hypersurface in
$\mathbb{H}^{n+1}$ with the horospherical support function $u=\log \varphi$. Assume
$\varphi$ is a smooth solution
to the equation \eqref{MA}. Clearly,
$\varphi$ is a smooth even and uniformly $h$-convex solution
to the equation \eqref{MA} and $\varphi(x)> 1$ for $x \in \mathbb{S}^n$.

The following easy and important equality is key for
the $C^0$ estimate.
\begin{lemma}
We have
\begin{eqnarray}\label{c0-12}
\frac{1}{2}\Big(\max_{\mathbb{S}^n}\varphi+\frac{1}{\max_{\mathbb{S}^n}\varphi}\Big)\leq \min_{\mathbb{S}^n}\varphi.
\end{eqnarray}
\end{lemma}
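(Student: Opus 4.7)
The plan is to read the inequality geometrically, via the horospherical support function, and exploit the evenness hypothesis to control an inner product that otherwise has no a priori sign. Let $x_M \in \mathbb{S}^n$ be a point where $\varphi$ attains its maximum $\varphi_M$ and $x_m$ a point where $\varphi$ attains its minimum $\varphi_m$. Since $\varphi$ is even, both $x_m$ and $-x_m$ are minimizers; I will replace $x_m$ by $-x_m$ if needed so that $\langle x_M, x_m\rangle \geq 0$.

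Next, I apply the embedding formula \eqref{X} at $x_M$. Because $x_M$ is an interior maximum, $D\varphi(x_M)=0$ and $|D\varphi(x_M)|^2=0$, so
\begin{eqnarray*}
\overline{X}(x_M)=\tfrac{1}{2}\varphi_M(-x_M,1)+\tfrac{1}{2\varphi_M}(x_M,1).
\end{eqnarray*}
This point lies in $\partial\Omega\subset\Omega$. The other ingredient is the sup characterization \eqref{SD}, which gives
\begin{eqnarray*}
\varphi_m=\varphi(x_m)\geq -\langle \overline{X}(x_M), (x_m,1)\rangle.
\end{eqnarray*}
A straightforward computation of the Minkowski inner product (using $\langle (y,1),(z,1)\rangle=\langle y,z\rangle_{\mathbb{R}^{n+1}}-1$) yields
\begin{eqnarray*}
-\langle \overline{X}(x_M), (x_m,1)\rangle
=\frac{\varphi_M^2-1}{2\varphi_M}\,\langle x_M, x_m\rangle+\frac{1}{2}\Big(\varphi_M+\frac{1}{\varphi_M}\Big).
\end{eqnarray*}

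Since the paragraph preceding the lemma records $\varphi>1$ on $\mathbb{S}^n$ (so $\varphi_M^2-1>0$) and since the preparatory choice guarantees $\langle x_M,x_m\rangle\geq 0$, the cross term is non-negative, and we obtain
\begin{eqnarray*}
\min_{\mathbb{S}^n}\varphi=\varphi_m\geq \tfrac{1}{2}\Big(\varphi_M+\tfrac{1}{\varphi_M}\Big)=\tfrac{1}{2}\Big(\max_{\mathbb{S}^n}\varphi+\tfrac{1}{\max_{\mathbb{S}^n}\varphi}\Big),
\end{eqnarray*}
which is exactly \eqref{c0-12}. The only real subtlety, and the reason the hypothesis $\varphi(x)=\varphi(-x)$ is used, is the selection of a minimizer $x_m$ with $\langle x_M,x_m\rangle\geq 0$: without evenness that inner product could be strictly negative and the argument would fail. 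The remaining work is a short Lorentzian computation driven by the vanishing of $D\varphi$ at $x_M$.
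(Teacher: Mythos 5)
Your proof is correct and follows essentially the same route as the paper's: evaluate the embedding formula \eqref{X} at the maximum point (where $D\varphi$ vanishes), apply the sup characterization \eqref{SD} of the horospherical support function, and use evenness to select a minimizer with nonnegative inner product against the maximizer so that the cross term in the resulting inequality can be discarded. The only cosmetic difference is that you group the inner product computation as $\frac{\varphi_M^2-1}{2\varphi_M}\langle x_M,x_m\rangle + \frac12(\varphi_M+\varphi_M^{-1})$ while the paper keeps it in the form $\frac12\varphi(x_1)(1+\langle x_1,x\rangle)+\frac{1}{2\varphi(x_1)}(1-\langle x_1,x\rangle)$; these are identical.
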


\begin{proof}
The inequality can be found in the proof of Lemma 7.2 in \cite{Li-Xu}. For completeness, we give a proof here.
Assume that $\varphi(x_1)=\max_{\mathbb{S}^n}\varphi$ and denote $\overline{X}(x_1)=G^{-1}(x_1)$ as before. Then, we have for any $x \in \mathbb{S}^n$ by
the definition of the horospherical support function \eqref{SD}
\begin{eqnarray*}
-\langle \overline{X}(x_1), (x, 1)\rangle\leq \varphi(x), \quad \forall x \in \mathbb{S}^n.
\end{eqnarray*}
Substituting the expression \eqref{X} for $\overline{X}$ into the above equality yields
\begin{eqnarray}\label{D0-3}
\frac{1}{2}\varphi(x_1)(1+\langle x_1, x\rangle)+\frac{1}{2}\frac{1}{\varphi(x_1)}(1-\langle x_1, x\rangle)\leq \varphi(x),
\end{eqnarray}
where we used the fact $D\varphi(x_1)=0$. Note that $\varphi(x_1)\geq 1$, we find from \eqref{D0-3}
\begin{eqnarray}\label{D0-4}
\frac{1}{2}\Big(\varphi(x_1)+\frac{1}{\varphi(x_1)}\Big)\leq \varphi(x) \quad \mbox{for} \quad \langle x, x_1\rangle\geq 0.
\end{eqnarray}
Since $\varphi$ is even, we can assume
that the minimum point $x_0$ of $\varphi(x)$ satisfies $\langle x_0, x_1\rangle\geq 0$. Thus,
the equality \eqref{c0-12} follows that from \eqref{D0-4}.
\end{proof}

Now, we use the maximum principle to get the $C^0$-estimate.
\begin{lemma}\label{C-C0}
We have
\begin{eqnarray}\label{C0}
1<C\leq\min_{\mathbb{S}^n}\varphi\leq \max_{\mathbb{S}^n}\varphi\leq
\Big(\frac{2}{n}\max_{\mathbb{S}^n}f+1\Big)^{\frac{1}{2}}+\Big(\frac{2}{n}\max_{\mathbb{S}^n}f\Big)^{\frac{1}{2}},
\end{eqnarray}
where $C$ is a positive constant depending on, $n$ and $\min_{\mathbb{S}^n}f$.
\end{lemma}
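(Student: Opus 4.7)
The plan is to apply the classical maximum principle to the scalar equation \eqref{MA} at interior extremal points of $\varphi$, and then couple the resulting one-sided bounds with the two-sided inequality \eqref{c0-12} of Lemma 3.1. At a point $x_0$ where $\varphi$ attains its minimum we have $D\varphi(x_0)=0$ and $\Delta\varphi(x_0)\ge 0$, while at a maximum point $x_1$ we have $D\varphi(x_1)=0$ and $\Delta\varphi(x_1)\le 0$. Substituting into \eqref{MA} and cancelling the gradient term will reduce the PDE to purely algebraic inequalities in $m:=\min_{\mathbb{S}^n}\varphi$ and $M:=\max_{\mathbb{S}^n}\varphi$.

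For the upper bound, evaluating \eqref{MA} at the minimum point yields $\frac{n}{2}(m-m^{-1})\le m^{-1}f(x_0)\le m^{-1}\max_{\mathbb{S}^n}f$, which rearranges to
\begin{equation*}
m\le \Big(1+\tfrac{2}{n}\max_{\mathbb{S}^n}f\Big)^{1/2}=:a.
\end{equation*}
Lemma 3.1 gives $\tfrac12(M+M^{-1})\le m\le a$, so $M$ satisfies the quadratic inequality $M^2-2aM+1\le 0$. Solving yields
\begin{equation*}
M\le a+\sqrt{a^{2}-1}=\Big(1+\tfrac{2}{n}\max_{\mathbb{S}^n}f\Big)^{1/2}+\Big(\tfrac{2}{n}\max_{\mathbb{S}^n}f\Big)^{1/2},
\end{equation*}
which is exactly the right-hand inequality in \eqref{C0}.

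For the lower bound $m>1$, I would instead evaluate \eqref{MA} at the maximum point $x_1$. The sign of $\Delta\varphi(x_1)$ reverses the inequality to $\tfrac{n}{2}(M-M^{-1})\ge M^{-1}f(x_1)\ge M^{-1}\min_{\mathbb{S}^n}f$, giving $M\ge b:=(1+\tfrac{2}{n}\min_{\mathbb{S}^n}f)^{1/2}>1$. Invoking Lemma 3.1 again together with monotonicity of the function $t\mapsto t+t^{-1}$ on $[1,\infty)$ produces
\begin{equation*}
m\ge \tfrac12(M+M^{-1})\ge \tfrac12(b+b^{-1})>1,
\end{equation*}
and $\tfrac12(b+b^{-1})$ depends only on $n$ and $\min_{\mathbb{S}^n}f$, as required.

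There is no serious obstacle here: the two sign-sensitive applications of the maximum principle only produce one-sided bounds (an upper bound for $m$ and a lower bound for $M$), which individually are useless for the opposite extrema, so the essential mechanism is Lemma 3.1, which transfers information back and forth between $m$ and $M$. The only subtlety worth flagging is the strict inequality $m>1$, which comes from the strict inequality $b>1$ (a consequence of $f>0$) combined with AM--GM, so the bound depends on a positive lower bound for $f$ but not on its regularity.
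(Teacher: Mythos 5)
Your proof is correct and follows essentially the same strategy as the paper: apply the maximum principle to \eqref{MA} at interior extrema of $\varphi$ to obtain the one-sided bounds $\min\varphi\le(1+\tfrac{2}{n}\max f)^{1/2}$ and $\max\varphi\ge(1+\tfrac{2}{n}\min f)^{1/2}>1$, then combine these with the inequality \eqref{c0-12} of Lemma~3.1 to recover the two-sided estimate. The only difference is that you supply the algebra (solving the quadratic in $M$, monotonicity of $t\mapsto t+t^{-1}$) that the paper leaves implicit in the phrase ``Combining \eqref{c0-11} and \eqref{c0-12}.''
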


\begin{proof}
Applying the maximum principle, we have from the equation \eqref{MA}
\begin{eqnarray}\label{c0-11}
\min_{\mathbb{S}^n}\varphi\leq \Big(\frac{2}{n}\max_{\mathbb{S}^n}f+1\Big)^{\frac{1}{2}}, \quad \mbox{and} \quad
\max_{\mathbb{S}^n}\varphi\geq \Big(\frac{2}{n}\min_{\mathbb{S}^n}f+1\Big)^{\frac{1}{2}}>1.
\end{eqnarray}
Combining \eqref{c0-11} and \eqref{c0-12}, we find
\begin{eqnarray*}
1<C\leq\min_{\mathbb{S}^n}\varphi\leq \max_{\mathbb{S}^n}\varphi\leq \Big(\frac{2}{n}\max_{\mathbb{S}^n}f+1\Big)^{\frac{1}{2}}+\Big(\frac{2}{n}\max_{\mathbb{S}^n}f\Big)^{\frac{1}{2}}.
\end{eqnarray*}
So, we complete the proof.
\end{proof}

To obtain the the gradient estimate, we recall Lemmas 7.3 in \cite{Li-Xu}.

\begin{lemma}
For a smooth, origin symmetric and uniformly $h$-convex hypersurface $M$ in $\mathbb{H}^{n+1}$, we have
\begin{eqnarray}\label{r-u}
|D \varphi|<\varphi \quad \mbox{on} \quad \mathbb{S}^n.
\end{eqnarray}
\end{lemma}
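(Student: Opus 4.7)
The plan is to exploit the origin-symmetry of $M$ by symmetrizing the support inequality encoded in \eqref{SD}. I will introduce the ambient isometry $\tau\colon\mathbb{H}^{n+1}\to\mathbb{H}^{n+1}$ defined by $\tau(X^1,\ldots,X^{n+1},X^0)=(-X^1,\ldots,-X^{n+1},X^0)$; it fixes $N=(0,1)$ and, at infinity, acts on $\mathbb{S}^n$ by the antipodal map. Origin-symmetry of $M$ then becomes the invariance $\tau(\Omega)=\Omega$ of the compact $h$-convex body $\Omega$ bounded by $M$.

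The central step will be a two-line symmetrization. For arbitrary $X=(X^1,\ldots,X^{n+1},X^0)\in\Omega$ and $x\in\mathbb{S}^n$, applying \eqref{SD} to both $X$ and $\tau(X)\in\Omega$ yields
\begin{eqnarray*}
-\sum_{i=1}^{n+1}X^ix^i + X^0\leq\varphi(x) \quad \text{and} \quad \sum_{i=1}^{n+1}X^ix^i+X^0\leq\varphi(x).
\end{eqnarray*}
Adding these kills the spatial cross term and produces the pointwise bound $X^0\leq\varphi(x)$ valid for every $X\in\Omega$. I would then specialize to $X=\overline{X}(x)=G^{-1}(x)\in\partial\Omega$, where the time coordinate can be read from \eqref{X} (or equivalently from \eqref{SS-1}) as $\overline{X}^0(x)=\cosh r=\tfrac{1}{2}(\varphi+\varphi^{-1})+\tfrac{|D\varphi|^2}{2\varphi}$. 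Substituting into the bound and multiplying by $2\varphi>0$ rearranges it to
\begin{eqnarray*}
|D\varphi|^2\leq\varphi^2-1,
\end{eqnarray*}
and since $\varphi^2-1<\varphi^2$, the strict inequality $|D\varphi|<\varphi$ follows at once. As a consistency check, the same estimate automatically enforces $\varphi\geq 1$, in agreement with the fact that origin-symmetry combined with geodesic convexity puts $N\in\Omega$.

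The argument is short; the only essential ingredient is the symmetrization in the central step, which is precisely where origin-symmetry enters. Without that assumption the linear-in-$x$ terms in the support inequality fail to cancel, and no pure support-function manipulation of this shape can reach $|D\varphi|<\varphi$ — indeed the conclusion genuinely fails for general uniformly $h$-convex hypersurfaces, so the symmetry hypothesis is unavoidable rather than merely convenient.
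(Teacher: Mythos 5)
Your argument is correct. The paper itself gives no proof of this lemma—it simply cites Lemma~7.3 of Li--Xu—so there is no in-text argument to compare against, but your derivation is a valid self-contained substitute, and it is closely in the spirit of the paper's own proof of the $C^0$ bound \eqref{c0-12}, which likewise feeds the parametrization \eqref{X} into the support inequality \eqref{SD}. The key points all check out: $\tau(X^1,\dots,X^{n+1},X^0)=(-X^1,\dots,-X^{n+1},X^0)$ preserves the Lorentzian form and the hyperboloid, fixes $N$, and induces the antipodal map at infinity, so origin-symmetry of $M$ is indeed $\tau$-invariance of $\Omega$; the support inequality $-\langle X,(x,1)\rangle\le\varphi(x)$ applied to both $X$ and $\tau(X)$ adds to $X^0\le\varphi(x)$ for all $X\in\Omega$; and specializing to $X=\overline X(x)$ with $\overline X^0=\cosh r=\tfrac{|D\varphi|^2}{2\varphi}+\tfrac12(\varphi+\varphi^{-1})$ gives, after multiplying by $2\varphi>0$, the bound $|D\varphi|^2\le\varphi^2-1<\varphi^2$. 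This is in fact a quantitative refinement of the stated inequality, and as you note it also yields $\varphi\ge1$ for free.

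Two small remarks. First, \eqref{SD} applied to $\overline X(x)$ itself is an equality (that is precisely what makes $\overline X(x)$ the support point in direction $x$), so only the $\tau(\overline X(x))$ inequality carries new information; the conclusion is unchanged. Second, your closing assertion that the inequality "genuinely fails" without origin-symmetry is not substantiated and is at least doubtful: for the constant-$f$ solutions $\varphi(x)=c\bigl(\sqrt{|x_0|^2+1}-\langle x_0,x\rangle\bigr)$ of Lemma~\ref{U-C}, a direct computation gives $|D\varphi|/\varphi<1$ everywhere, with the ratio approaching $1$ only as $|x_0|\to\infty$. The lemma as stated does assume origin-symmetry and your proof uses it essentially, so nothing is wrong, but I would soften or drop that final claim.
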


The gradient estimate follows from \eqref{r-u}.

\begin{lemma}\label{C-C1}
We have
\begin{eqnarray}\label{C1}
|D\varphi(x)|\leq C, \quad \forall \ x \in \mathbb{S}^n,
\end{eqnarray}
where $C$ is a positive constant depending only on the constant in Lemma \ref{C-C0}.
\end{lemma}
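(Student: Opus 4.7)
The plan is essentially a one-line combination of the two preceding results, so there is no real obstacle here. First I would invoke the gradient-to-value comparison \eqref{r-u}, which holds precisely because $M$ is smooth, origin-symmetric, and uniformly $h$-convex (all of which follow from our standing assumptions in this section). This gives the pointwise bound
\begin{equation*}
|D\varphi(x)| < \varphi(x) \qquad \text{for all } x \in \mathbb{S}^n.
\end{equation*}

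Next I would simply bound the right-hand side by the $C^0$ estimate from Lemma \ref{C-C0}, which gives
\begin{equation*}
\varphi(x) \leq \max_{\mathbb{S}^n}\varphi \leq \Big(\tfrac{2}{n}\max_{\mathbb{S}^n} f + 1\Big)^{\frac{1}{2}} + \Big(\tfrac{2}{n}\max_{\mathbb{S}^n} f\Big)^{\frac{1}{2}}.
\end{equation*}
Chaining the two inequalities yields \eqref{C1} with a constant $C$ depending only on $n$ and $\max_{\mathbb{S}^n} f$, matching the claim in the lemma.

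There is no genuine difficulty: all the analytic content (the origin-symmetry argument giving $|D\varphi|<\varphi$ via \eqref{r-u}, and the maximum-principle-based $C^0$ bound) has already been established. The only thing to check is that the hypotheses of the lemma producing \eqref{r-u} are in force, but this is immediate from the convention stated at the beginning of Section 3 that $M$ is closed, origin-symmetric, and uniformly $h$-convex.
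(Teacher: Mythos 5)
Your proof is correct and follows exactly the route the paper intends: the paper simply states "the gradient estimate follows from \eqref{r-u}," meaning one combines $|D\varphi|<\varphi$ with the $C^0$ upper bound \eqref{C0}, which is precisely your chain of inequalities. Your added check that the standing assumptions of Section 3 (smooth, closed, origin-symmetric, uniformly $h$-convex) license the use of \eqref{r-u} is the right thing to verify and is indeed satisfied.
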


Higher estimates follows from Schauder estimates and the positivity of the matrix $U[\varphi]$.

\begin{lemma}
We have
\begin{eqnarray}\label{C2+}
|\varphi|_{C^{4,\alpha}(\mathbb{S}^n)}\leq C,
\end{eqnarray}
where $C$ is a positive constant depending
only on the constant in Lemmas \ref{C-C0} and \ref{C-C1}.
\end{lemma}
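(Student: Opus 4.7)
The plan is to view equation \eqref{MA} as a semilinear elliptic equation with the Laplace--Beltrami operator on $\mathbb{S}^n$ as its principal part, and then to bootstrap via $L^p$ and Schauder estimates. Isolating the highest derivative, the equation becomes
\begin{eqnarray*}
\Delta\varphi=\frac{f(x)}{\varphi}+\frac{n}{2}\frac{|D\varphi|^2}{\varphi}-\frac{n}{2}\Big(\varphi-\frac{1}{\varphi}\Big).
\end{eqnarray*}
By Lemmas \ref{C-C0} and \ref{C-C1}, together with the strict lower bound $\varphi>1$, the right-hand side is a smooth function of $x$, $\varphi$, $D\varphi$ with $\varphi$ bounded away from zero, and its $L^\infty$-norm on $\mathbb{S}^n$ is controlled by the constants in those two lemmas.

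First I would apply the $L^p$ theory for the Laplace--Beltrami operator on the closed manifold $\mathbb{S}^n$ to obtain $|\varphi|_{W^{2,p}(\mathbb{S}^n)}\leq C_p$ for every $p\in(1,\infty)$, and then use Sobolev embedding to deduce $|\varphi|_{C^{1,\alpha}(\mathbb{S}^n)}\leq C$ for every $\alpha\in(0,1)$. With this improved regularity, the right-hand side above, being a smooth function of $(x,\varphi,D\varphi)$ with $\varphi$ bounded from $0$, lies in $C^{0,\alpha}(\mathbb{S}^n)$ with a norm depending only on the previous constants; Schauder estimates for the Laplacian on $\mathbb{S}^n$ then give $|\varphi|_{C^{2,\alpha}(\mathbb{S}^n)}\leq C$. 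It is at this step that the positivity of $U[\varphi]$ is invoked implicitly, as it guarantees that we are genuinely working with a classical solution of an elliptic problem and not merely a weak object.

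Finally I would differentiate the equation tangentially. Given $e_i$ a local frame on $\mathbb{S}^n$, the function $w_i:=D_i\varphi$ satisfies a linear elliptic equation on $\mathbb{S}^n$ whose coefficients and inhomogeneous term are bounded in $C^{\alpha}$ by the $C^{2,\alpha}$ estimate just obtained. Schauder applied to this linearised equation upgrades to $|w_i|_{C^{2,\alpha}}\leq C$, i.e.\ $|\varphi|_{C^{3,\alpha}(\mathbb{S}^n)}\leq C$. Differentiating once more and repeating the argument yields $|\varphi|_{C^{4,\alpha}(\mathbb{S}^n)}\leq C$. No step is a real obstacle here; the only mild subtlety is making sure at every round of bootstrapping that the lower-order nonlinearity $\tfrac{f}{\varphi}+\tfrac{n}{2}\tfrac{|D\varphi|^2}{\varphi}-\tfrac{n}{2}(\varphi-\tfrac{1}{\varphi})$ retains the Hölder regularity matching the newly established regularity of $\varphi$, which is automatic from the positive lower bound $\varphi>1$ and the smoothness of $f$.
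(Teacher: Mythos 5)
Your proposal is correct, but it takes a genuinely different route from the paper. The paper obtains the $C^2$ bound by combining the positive semi-definiteness of $U[\varphi]$ with the equation: since $\operatorname{tr}U[\varphi]=\varphi^{-1}f$ is bounded by the $C^0$ estimate and every eigenvalue of a positive semi-definite matrix is squeezed between $0$ and the trace, each $\lambda_i(U)$ is immediately sandwiched, and because $U[\varphi]-D^2\varphi$ is already under control by the $C^0$ and $C^1$ bounds, this yields $|\varphi|_{C^2}\leq C$ directly; Schauder then takes over. You instead treat \eqref{MA} as a semilinear equation whose principal part is simply the Laplace--Beltrami operator, bound the right-hand side in $L^\infty$, and run the standard $L^p$--Sobolev--Schauder bootstrap. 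Both are valid, and yours is arguably more robust because it never needs the convexity of $U[\varphi]$. The one inaccuracy in your write-up is the remark that the positivity of $U[\varphi]$ is ``invoked implicitly'' to ensure ellipticity: that is not so. Since the principal part of \eqref{MA} is literally $\Delta$, the equation is uniformly elliptic regardless of the sign of $U[\varphi]$; the positivity plays no role in your regularity argument at all (it is only a geometric condition identifying $\varphi$ as a genuine support function, and the ingredient the paper exploits to shortcut straight to the $C^2$ bound). With that remark removed, your proof stands on its own and illustrates that the full-rank condition is not needed for this particular a priori estimate.
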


\begin{proof}
Let $\lambda_1(U), ..., \lambda_n(U)$ are eigenvalues of the matrix $U$.
On the one hand, we have that for $1\leq i\leq n$ by the positivity of the matrix $U$
and $C^0$ estimate \eqref{C0}
\begin{eqnarray}\label{o2}
\lambda_i(U)\leq \mathrm{tr} U=\Delta \varphi-\frac{n}{2\varphi}|D\varphi|^2
+\frac{n}{2}(\varphi-\frac{1}{\varphi})=\varphi^{-1}f\leq C.
\end{eqnarray}
On the other hand, we have that for $1\leq i\leq n$ by the positivity of the matrix $U$, \eqref{o2}
and $C^0$ estimate \eqref{C0}
\begin{eqnarray*}
\lambda_i(U)=\mathrm{tr} U-\sum_{j=1, j\neq i}^{n}\lambda_j(U)\geq \varphi^{-1}f-(n-1)C\geq -C.
\end{eqnarray*}
Thus,
\begin{eqnarray*}
|\varphi|_{C^{2}(\mathbb{S}^n)}\leq C.
\end{eqnarray*}
Therefore, the a priori estimate \eqref{C2+} follows from Schauder estimates \cite{GT}.
\end{proof}

\section{Full rank theorem}

In this section, we prove the following full rank theorem.

\begin{theorem}\label{FRT}
Let $f$ be a smooth and positive function on $\mathbb{S}^n$ satisfying Conditions (1) and (2)
in Theorem \ref{Main}.
If $\varphi$ is an even and $h$-convex solution to the equation \eqref{MA} satisfying \eqref{DE}, then it is an even and
uniformly $h$-convex solution to the equation \eqref{MA}, where
a $h$-convex solution $\varphi$ is a solution satisfying that $U[\varphi]$ is
positive semi-definite on $\mathbb{S}^n$.
\end{theorem}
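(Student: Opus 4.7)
The plan is to adapt Guan--Ma's constant rank argument for the Christoffel--Minkowski problem \cite{GM03} to the hyperbolic equation \eqref{MA}, and then use the integral identity \eqref{DE} to rule out the degenerate case. Let $\ell = \min_{\mathbb{S}^n} \mathrm{rank}\, U[\varphi]$, attained at some point $x_0$; the goal is to prove $\ell = n$, so suppose for contradiction that $\ell < n$. Rewriting \eqref{MA} as $\mathrm{tr}(U[\varphi]) = \varphi^{-1} f$ makes the natural linearized operator simply $\Delta$. In a normal frame at $x_0$ diagonalizing $U[\varphi](x_0)$ so that its last $n - \ell$ eigenvalues vanish, I would introduce a Guan--Ma auxiliary function of the schematic form
\[
\phi(x) = \sigma_{\ell + 1}(U[\varphi])(x) + \text{(correction built from lower-order minors)},
\]
chosen so that $\phi \ge 0$ on $\mathbb{S}^n$ with $\phi(x_0) = 0$ and $\{\phi = 0\} = \{\mathrm{rank}\, U[\varphi] \le \ell\}$.

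The heart of the argument is then to establish a differential inequality of the form
\[
\Delta \phi \le C\bigl(\phi + |D\phi|\bigr)
\]
in a neighborhood of $x_0$. One differentiates \eqref{MA} once and twice, expresses the third and fourth derivatives of $\varphi$ in terms of derivatives of $U[\varphi]$ modulo lower-order, and collects the obstructing contributions. The extra terms $-\tfrac{n}{2}|D\varphi|^2/\varphi$ and $\tfrac{n}{2}(\varphi - \varphi^{-1})$ in \eqref{MA} relative to the Euclidean Christoffel--Minkowski equation generate additional first- and zeroth-order errors. The key algebraic point I would verify is that these errors are absorbed exactly when $D^2(f^{-1}) - |D f^{-1}|\, I + c\, f^{-1}\, I \ge 0$ for $c = (\tfrac{8}{n} \max f + 2)^{-1}$, which is precisely Condition (2) of Theorem \ref{Main}; the term $-|D f^{-1}| I$ absorbs cross terms involving $D f^{-1}$, while the specific multiple of $f^{-1} I$ is dictated by the $C^0$ bound of Lemma \ref{C-C0} --- so the a priori estimates of Section 3 enter the full rank theorem explicitly through the coefficient.

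Once this inequality is in place, the strong minimum principle applied to $\phi \ge 0$ with $\phi(x_0) = 0$ yields $\phi \equiv 0$ in a neighborhood of $x_0$. A standard open-closed argument on the set $\{\phi = 0\}$, using the connectedness of $\mathbb{S}^n$, then propagates this globally, so $\mathrm{rank}\, U[\varphi] \equiv \ell$ on all of $\mathbb{S}^n$. Plugging $k = \ell$ into the integral identity \eqref{DE}, the left-hand side vanishes because $\sigma_{\ell+1}(U[\varphi]) \equiv 0$. On the right-hand side, $\sigma_\ell(U[\varphi]) > 0$ everywhere (the rank being exactly $\ell$), while Lemma \ref{C-C0} gives $\varphi > 1$ on $\mathbb{S}^n$ --- this is where the evenness hypothesis enters, via \eqref{c0-12} --- so the bracket $\tfrac{|D\varphi|^2}{2\varphi} + \tfrac{1}{2}(\varphi - \varphi^{-1})$ is strictly positive. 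The right-hand side of \eqref{DE} is therefore strictly positive, contradicting that the left-hand side is $0$, and forcing $\ell = n$.

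The principal obstacle is the differential inequality for $\phi$: the Guan--Ma bookkeeping is noticeably more delicate in the hyperbolic setting than in \cite{GM03} because of the extra gradient and zeroth-order contributions in \eqref{MA}, and pinpointing the precise constant in Condition (2) that makes all error terms absorb requires a careful expansion at the minimum-rank point. Once this inequality is obtained, the remaining ingredients --- the strong minimum principle, the open-closed propagation, and the integral contradiction through \eqref{DE} --- are essentially mechanical.
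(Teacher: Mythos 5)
Your plan is essentially the paper's proof: a Guan--Ma constant-rank argument built around a differential inequality $\Delta\psi \lesssim C_1|\nabla\psi|+C_2\psi$ for $\psi=\sigma_{\ell+1}(U[\varphi])$, where the obstructing curvature terms are absorbed precisely by Condition~(2) (with the coefficient coming from the $C^0$ bound), followed by the strong minimum principle and the integral identity \eqref{DE} to rule out a degenerate rank. Two small simplifications in the paper's version: because the underlying equation is the trace equation $\sigma_1(U[\varphi])=\varphi^{-1}f$, no Guan--Ma correction term from lower-order minors is needed in the test function (the paper works with $\psi=\sigma_{\ell+1}(U)$ alone); and since choosing $\ell$ as the minimal rank gives $\sigma_\ell(U)\ge C_0>0$ uniformly on the compact sphere, the strong minimum principle applies on all of $\mathbb{S}^n$ at once, so the separate open--closed propagation step you describe is not required.
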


Recall
\begin{eqnarray*}
U[\varphi]=D^2\varphi-\frac{1}{2}\frac{|D\varphi|^2}{\varphi}I+\frac{1}{2}(\varphi-\frac{1}{\varphi})I.
\end{eqnarray*}

\begin{lemma}
For each fixed $x \in \mathbb{S}^n$, we choose a local orthonormal frame so that $U$ is diagonal at $x$,
then we have at $x$
\begin{eqnarray}\label{1ex-0}
U_{ij\alpha}=U_{ji \alpha}, \quad i \neq j,
\end{eqnarray}

\begin{eqnarray}\label{1ex}
U_{\alpha\alpha i}=U_{i \alpha \alpha}-\frac{\varphi_i}{\varphi}U_{ii}, \quad \alpha\neq i
\end{eqnarray}
and
\begin{eqnarray}\label{2ex}
&&U_{ii\alpha\alpha}-U_{\alpha\alpha i i}
\nonumber \\ \nonumber&=&-\frac{\sum_{k}U_{\alpha\alpha k}\varphi_k}{\varphi}-\bigg[-\frac{2\varphi_{\alpha}^{2}}{\varphi^2}+\frac{|D \varphi|^2}{2\varphi^2}+\frac{1}{\varphi}U_{\alpha\alpha}+\frac{1}{2}\Big(1+\frac{1}{\varphi^2}\Big)\bigg]U_{\alpha\alpha}
\\&&+\frac{\sum_{k}U_{ii k}\varphi_k}{\varphi}+\bigg[-\frac{2\varphi_{i}^{2}}{\varphi^2}+\frac{|D \varphi|^2}{2\varphi^2}+\frac{1}{\varphi}U_{ii}+\frac{1}{2}\Big(1+\frac{1}{\varphi^2}\Big)\bigg]U_{ii}, \quad \alpha\neq i.
\end{eqnarray}
\end{lemma}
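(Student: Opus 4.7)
The plan is to exploit the decomposition $U_{ij} = \varphi_{ij} + Q\,\delta_{ij}$, where $Q := -\frac{|D\varphi|^2}{2\varphi} + \frac{1}{2}(\varphi-\varphi^{-1})$, together with the Ricci identity on $\mathbb{S}^n$. Constant sectional curvature $1$ gives $R_{ijkl}=\delta_{ik}\delta_{jl}-\delta_{il}\delta_{jk}$, so every commutator of covariant derivatives reduces to explicit Kronecker-delta expressions. Since the Kronecker tensor is parallel, every occurrence of $\nabla Q\,\delta_{ij}$ differentiates only the scalar factor $Q$.

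Identity \eqref{1ex-0} will follow at once from the symmetry $U_{ij}=U_{ji}$, since the covariant derivative preserves symmetry in the first two indices; no diagonalising frame is even needed. For \eqref{1ex} I will write $U_{\alpha\alpha i} - U_{i\alpha\alpha} = (\varphi_{\alpha\alpha i}-\varphi_{i\alpha\alpha}) + Q_i$, using that the $Q\,\delta_{i\alpha}$ piece of $U_{i\alpha}$ and its derivative vanish for $\alpha\ne i$. The scalar Ricci identity $\varphi_{ijk}-\varphi_{ikj}=\delta_{ik}\varphi_j-\delta_{ij}\varphi_k$ then yields $\varphi_{\alpha\alpha i}-\varphi_{i\alpha\alpha} = -\varphi_i$ for $\alpha\ne i$. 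Expanding $Q_i$ directly and substituting $\varphi_{k\ell}=\varphi_{kk}\delta_{k\ell}$ with $\varphi_{kk}=U_{kk}-Q$ (valid in the diagonalising frame), a short algebraic collapse gives $Q_i = \varphi_i - \varphi_i U_{ii}/\varphi$, and \eqref{1ex} drops out.

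Identity \eqref{2ex} follows the same template one order higher. I will split $U_{ii\alpha\alpha} - U_{\alpha\alpha ii} = (\varphi_{ii\alpha\alpha}-\varphi_{\alpha\alpha ii}) + (Q_{\alpha\alpha}-Q_{ii})$, apply the Ricci identity twice to commute $\nabla_\alpha\nabla_\alpha$ past $\nabla_i\nabla_i$ in the fourth covariant derivatives of $\varphi$, and differentiate $Q$ twice explicitly. At every stage I will substitute $\varphi_{k\ell}=\varphi_{kk}\delta_{k\ell}$ and $\varphi_{kk}=U_{kk}-Q$ in the chosen frame, and whenever a surviving third derivative $\varphi_{\alpha\alpha k}$ or $\varphi_{iik}$ appears I will convert it into $U_{\alpha\alpha k}$ or $U_{iik}$ via \eqref{1ex}. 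After collecting, the remaining scalar coefficients split naturally into the $U_{\alpha\alpha}$-linear and $U_{ii}$-linear blocks displayed in \eqref{2ex}, with the weights $\varphi_{\alpha}^2/\varphi^2$ and $\varphi_i^2/\varphi^2$ arising from differentiating the gradient-term $|D\varphi|^2/(2\varphi)$ in $Q$.

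The main obstacle will be the bookkeeping in the fourth-derivative step: two successive Ricci commutators acting on a Hessian, combined with the three nonlinear pieces of $Q$, generate many intermediate terms that must symmetrise correctly under the involution $\alpha\leftrightarrow i$. The cleanest organisation is to keep the $\alpha$-block and $i$-block strictly parallel from the outset, and to trade Hessian entries $\varphi_{kk}$ for $U_{kk}-Q$ as soon as they appear, so that the curvature contributions, the $Q$-derivatives, and the frame simplifications can each be verified independently before recombination.
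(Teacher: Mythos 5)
Your proposal is correct and follows essentially the same route as the paper: diagonalize $U$ at a point, use the Ricci identity on $\mathbb{S}^n$ to commute covariant derivatives of $\varphi$, and replace surviving Hessian entries by $U_{kk}-Q$ in the chosen frame. The only cosmetic difference is that you separate $U_{ij}=\varphi_{ij}+Q\,\delta_{ij}$ from the outset and track the Ricci commutators and the $Q$-derivatives as two independent blocks, whereas the paper differentiates the full expression for $U$ directly and sorts the resulting terms at the end; the two organizations produce the same intermediate identities (in particular your claimed collapse $Q_i=\varphi_i-\varphi_i U_{ii}/\varphi$ is exactly what makes the paper's line-by-line computation close).
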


\begin{proof}
Using the Ricci identity on $\mathbb{S}^n$
\begin{eqnarray*}
\varphi_{kji}-\varphi_{kij}=\delta_{ik}\varphi_j-\delta_{jk}\varphi_i,
\end{eqnarray*}
we have
\begin{eqnarray*}
U_{\alpha\alpha i}&=&
\varphi_{\alpha \alpha i}-\frac{\sum_{l}\varphi_l \varphi_{li}}{\varphi}+\frac{1}{2}\frac{|D\varphi|^2\varphi_i}{\varphi^2}
+\frac{1}{2}\Big(\varphi_i+\frac{\varphi_i}{\varphi^2}\Big)
\\&=&\varphi_{i\alpha \alpha}-\varphi_i-\frac{\sum_{l}\varphi_l \varphi_{li}}{\varphi}+\frac{1}{2}\frac{|D\varphi|^2\varphi_i}{\varphi^2}
+\frac{1}{2}\Big(\varphi_i+\frac{\varphi_i}{\varphi^2}\Big)
\\&=&U_{i \alpha \alpha}-\frac{\varphi_i}{\varphi}U_{ii}
\end{eqnarray*}
and
\begin{eqnarray*}
U_{ii\alpha\alpha}&=&\varphi_{ii\alpha\alpha}
-\frac{\sum_{k}(\varphi^{2}_{k\alpha}+\varphi_{k\alpha\alpha}\varphi_k)}{\varphi}
+\frac{2\sum_{k}\varphi_{k\alpha}\varphi_k\varphi_{\alpha}}{\varphi^2}+\frac{1}{2}|D\varphi|^2
\frac{\varphi\varphi_{\alpha\alpha}-2\varphi_{\alpha}^{2}}{\varphi^3}
\\&&+\frac{1}{2}\Big(\varphi_{\alpha\alpha}+\frac{\varphi_{\alpha\alpha}}{\varphi^2}-
\frac{2\varphi_{\alpha}^{2}}{\varphi^3}\Big)
\\&=&\varphi_{ii\alpha\alpha}
-\frac{\sum_{k}(\varphi^{2}_{k\alpha}+\varphi_{\alpha\alpha k}\varphi_k)}{\varphi}
+\frac{\varphi_{\alpha}^{2}-|D \varphi|^2}{\varphi}
+\frac{2\sum_{k}\varphi_{k\alpha}\varphi_k\varphi_{\alpha}}{\varphi^2}\\&&+\frac{1}{2}|D\varphi|^2
\frac{\varphi\varphi_{\alpha\alpha}-2\varphi_{\alpha}^{2}}{\varphi^3}
+\frac{1}{2}\Big(\varphi_{\alpha\alpha}+\frac{\varphi_{\alpha\alpha}}{\varphi^2}-
\frac{2\varphi_{\alpha}^{2}}{\varphi^3}\Big).
\end{eqnarray*}
Using the definition of $U$, it yields
\begin{eqnarray*}
\frac{2\sum_{k}\varphi_{k\alpha}\varphi_k\varphi_{\alpha}}{\varphi^2}-
\frac{|D\varphi|^2\varphi_{\alpha}^{2}}{\varphi^3}+\varphi_{\alpha}^{2}\Big(\frac{1}{\varphi}-\frac{1}{\varphi^3}\Big)
=\frac{2\varphi_{\alpha}^{2}U_{\alpha\alpha}}{\varphi^2}
\end{eqnarray*}
and
\begin{eqnarray*}
-\frac{\sum_{k}\varphi^{2}_{k\alpha}}{\varphi}+\frac{1}{2}|D\varphi|^2
\frac{\varphi_{\alpha\alpha}}{\varphi^2}
=-\frac{\varphi_{\alpha\alpha}}{\varphi}U_{\alpha\alpha}+\frac{1}{2}\varphi_{\alpha\alpha}\Big(1-\frac{1}{\varphi^2}\Big).
\end{eqnarray*}
Thus,
\begin{eqnarray*}
U_{ii\alpha\alpha}&=&\varphi_{ii\alpha\alpha}
-\frac{\sum_{k}\varphi_{\alpha\alpha k}\varphi_k}{\varphi}
-\frac{|D \varphi|^2}{\varphi}
+\Big(\frac{2\varphi_{\alpha}^{2}}{\varphi^2}-\frac{\varphi_{\alpha\alpha}}{\varphi}\Big)U_{\alpha\alpha}
+\varphi_{\alpha\alpha}.
\end{eqnarray*}
Using the following Ricci identity on $\mathbb{S}^n$
\begin{eqnarray*}
\varphi_{ii\alpha\alpha}=\varphi_{\alpha\alpha ii}-2\varphi_{\alpha\alpha}+2\varphi_{ii}
\end{eqnarray*}
and noticing that
\begin{eqnarray*}
\varphi_{\alpha\alpha k}-\varphi_{ii k}=U_{\alpha\alpha k}-U_{ii k}, \quad
\varphi_{\alpha\alpha}-\varphi_{ii}=U_{\alpha\alpha}-U_{ii},
\end{eqnarray*}
we have
\begin{eqnarray*}
&&U_{ii\alpha\alpha}-U_{\alpha\alpha i i}\\&=&
-\frac{\sum_{k}U_{\alpha\alpha k}\varphi_k}{\varphi}+\frac{\sum_{k}U_{ii k}\varphi_k}{\varphi}
+\Big(\frac{2\varphi_{\alpha}^{2}}{\varphi^2}-\frac{\varphi_{\alpha\alpha}}{\varphi}\Big)U_{\alpha\alpha}
-\Big(\frac{2\varphi_{i}^{2}}{\varphi^2}-\frac{\varphi_{ii}}{\varphi}\Big)U_{ii}+U_{ii}-U_{\alpha\alpha}
\\&=&-\frac{\sum_{k}U_{\alpha\alpha k}\varphi_k}{\varphi}+\frac{\sum_{k}U_{ii k}\varphi_k}{\varphi}
-\bigg[-\frac{2\varphi_{\alpha}^{2}}{\varphi^2}+\frac{|D \varphi|^2}{2\varphi^2}+\frac{1}{\varphi}U_{\alpha\alpha}+\frac{1}{2}\Big(1+\frac{1}{\varphi^2}\Big)\bigg]U_{\alpha\alpha}
\\&&+\bigg[-\frac{2\varphi_{i}^{2}}{\varphi^2}+\frac{|D \varphi|^2}{2\varphi^2}+\frac{1}{\varphi}U_{ii}+\frac{1}{2}\Big(1+\frac{1}{\varphi^2}\Big)\bigg]U_{ii}.
\end{eqnarray*}
\end{proof}

The proof of Full Rank Theorem \ref{FRT} is based on the following Deformation Lemma.

\begin{lemma}\label{DL}
Let $O\subset \mathbb{S}^n$ be an open subset, suppose
$\varphi \in C^4(O)$ is an even solution of \eqref{MA} in $O$,
and that the matrix $U$ is positive semi-definite. Suppose there is a positive constant $C_0>0$ such that
for a fixed integer $(n-1)\geq l\geq k$, $\sigma_l(U[\varphi(x)])\geq C_0$ for all $x \in O$.
Let $\psi(x)=\sigma_{l+1}(U[\varphi(x)])$ and let $\tau(x)$ be the largest eigenvalue of
\begin{eqnarray*}
-(f^{-1})_{ij}+|D f^{-1}|\delta_{ij}
-\frac{f^{-1}}{\frac{8}{n}\max_{\mathbb{S}^n}f+2}\delta_{ij}.
\end{eqnarray*}
Then, there are constants $C_1, C_2$ depending only on
$|\varphi|_{C^3}$, $|f|_{C^{2}}$, $n$ and $C_0$ such that
the differential inequality
\begin{eqnarray*}
\Delta\psi(x)\leq
(n-l)\varphi^{-1}(x)f^{2}(x)\sigma_l(U(x))\tau(x)+C_1|\nabla \psi(x)|+C_2\psi(x)
\end{eqnarray*}
holds in $O$.
\end{lemma}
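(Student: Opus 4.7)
The plan is to follow the Guan--Ma strategy developed for constant-rank type deformation lemmas in the Christoffel--Minkowski problem \cite{GM03}. Fix $x_0 \in O$, pick a local orthonormal frame normal at $x_0$, and rotate it so that $U := U[\varphi]$ is diagonal at $x_0$ with non-negative eigenvalues $\lambda_1, \ldots, \lambda_n$. All computations below are performed at $x_0$.

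First I differentiate $\psi = \sigma_{l+1}(U)$ twice with the help of Proposition \ref{AA}:
\begin{eqnarray*}
\Delta\psi \;=\; \sum_p \sigma_{l+1}^{\alpha\beta,\mu\nu}(U)\,U_{\alpha\beta,p}U_{\mu\nu,p} \;+\; \sum_p \sigma_{l+1}^{\alpha\beta}(U)\,U_{\alpha\beta,pp},
\end{eqnarray*}
and the diagonalization reduces the last sum to $\sum_{\alpha,p}\sigma_l(\lambda|\alpha)\,U_{\alpha\alpha,pp}$, where $\sigma_l(\lambda|\alpha)$ is the $l$-th elementary symmetric function of the eigenvalues with $\lambda_\alpha$ omitted. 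The commutator identity \eqref{2ex} then lets me swap the last two indices, $U_{\alpha\alpha,pp}=U_{pp,\alpha\alpha}+R_{\alpha}^{\,p}$, where $R_{\alpha}^{\,p}$ is a linear combination of first-derivative terms $U_{\alpha\alpha,k}\varphi_k/\varphi$ and $U_{pp,k}\varphi_k/\varphi$ together with zeroth-order-in-$U$ pieces built from $U_{\alpha\alpha}$, $U_{pp}$, and $\varphi^{\pm1}$. Tracing the equation \eqref{MA} gives $\sum_p U_{pp}=\varphi^{-1}f$, hence $\sum_p U_{pp,\alpha\alpha}=(\varphi^{-1}f)_{\alpha\alpha}$; expanding this and substituting $f_{\alpha\alpha}=-f^2(f^{-1})_{\alpha\alpha}+2f^{-1}f_\alpha^2$ isolates the leading piece $-\varphi^{-1}f^2(f^{-1})_{\alpha\alpha}$, with the remainder consisting of explicit gradient and zeroth-order terms controlled by the a priori bounds of Section 3.

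Next I apply the trace--eigenvalue inequality: for the diagonal matrix $\Lambda=\mathrm{diag}(\sigma_l(\lambda|\alpha))\ge 0$ and any symmetric $B$,
\begin{eqnarray*}
\mathrm{tr}(\Lambda B) \;\le\; \lambda_{\max}(B)\,\mathrm{tr}(\Lambda) \;=\; (n-l)\sigma_l(\lambda)\,\lambda_{\max}(B),
\end{eqnarray*}
using $\sum_\alpha\sigma_l(\lambda|\alpha)=(n-l)\sigma_l(\lambda)$. I apply this to the matrix assembled from $-\varphi^{-1}f^2(f^{-1})_{ij}$ plus the gradient and zeroth-order errors produced above, grouping terms so that the gradient errors are dominated by the $|Df^{-1}|\delta_{ij}$ piece of $\tau$ (via Cauchy--Schwarz and the identity $f_\alpha=-f^2(f^{-1})_\alpha$), while the zeroth-order errors line up with the $-f^{-1}/(\tfrac{8}{n}\max f+2)\delta_{ij}$ piece. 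The precise constant $\tfrac{8}{n}\max f+2$ is dictated by the $C^0$ bound $\max\varphi^2 \le \tfrac{8}{n}\max f+2$ of Lemma \ref{C-C0} combined with the $\tfrac{1}{2}(1+\varphi^{-2})U_{\alpha\alpha}$ contribution appearing in \eqref{2ex}. This yields the leading $(n-l)\varphi^{-1}f^2\sigma_l(U)\tau$ term of the conclusion.

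The remaining task is to absorb the quadratic contribution $\sigma_{l+1}^{\alpha\beta,\mu\nu}(U)\,U_{\alpha\beta,p}U_{\mu\nu,p}$ and the first-derivative part of $R_{\alpha}^{\,p}$ into $C_1|\nabla\psi|+C_2\psi$. The hypothesis $\sigma_l(U)\ge C_0>0$ is essential here: it ensures enough non-vanishing eigenvalues of $U$ that the relation $\nabla\psi=\sigma_{l+1}^{\alpha\beta}\nabla U_{\alpha\beta}$ can be inverted to recover the diagonal gradients $U_{\alpha\alpha,p}$ modulo terms of order $\psi$ itself, after which the first-derivative remainders of $R_{\alpha}^{\,p}$ are dominated by $C|\nabla\psi|+C\psi$. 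The sign pattern of $\sigma_{l+1}^{\alpha\beta,\mu\nu}$ (positive on the cross term, negative on the repeated-index term) together with the same inversion controls the quadratic $\sigma$-term by $C|\nabla\psi|$ in the style of Guan--Ma. The main obstacle is exactly this accounting: one must verify that every error --- the $E_\alpha$ coming from the $(\varphi^{-1}f)_{\alpha\alpha}$ expansion, the remainder $R_{\alpha}^{\,p}$ from the commutator, and the quadratic $\sigma$-term --- lands cleanly in exactly one of the three summands $\lambda_{\max}$ in $\tau$, $C_1|\nabla\psi|$, or $C_2\psi$, with the specific constant in $\tau$ fixed by the $C^0$ bound and no slack to spare.
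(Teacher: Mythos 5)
Your setup and the trace--eigenvalue reduction $\sum_\alpha \sigma_l(\lambda|\alpha)\,B_{\alpha\alpha}\le\lambda_{\max}(B)\,(n-l)\sigma_l(\lambda)$ are sound and do match the skeleton of the paper's argument. But there is a genuine gap in how you dispose of the quadratic term $\sum_p\sigma_{l+1}^{\alpha\beta,\mu\nu}U_{\alpha\beta,p}U_{\mu\nu,p}$: you claim it is ``controlled by $C|\nabla\psi|$'' after inverting $\nabla\psi=\sigma_{l+1}^{\alpha\beta}\nabla U_{\alpha\beta}$, and this is false on both counts. First, the inversion only recovers the \emph{sum} $\sum_{i\in B}U_{ii,p}$, not the individual bad-index gradients $U_{ii,p}$, and it says nothing at all about the off-diagonal gradients $U_{ij,p}$ with $i\in B$, $j\in G$, which appear in the quadratic term with coefficients of size $\sigma_{l-1}(G|j)$ that do not vanish with $\psi$. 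Second, and more fundamentally, the quadratic term must \emph{not} be absorbed: it is the source of the $-2f_i^2/f$ contribution, which is indispensable. The paper applies Cauchy--Schwarz to
\begin{equation*}
-2\sum_{i\in B}\sum_{\alpha\in G}\sigma_{l-1}(G|\alpha)\,U_{\alpha\alpha i}^2
\;\le\; -2\sigma_l(G)\sum_{i\in B}\frac{(\varphi^{-1}f)_i^2}{\varphi^{-1}f}\ +\ \text{(absorbable)},
\end{equation*}
using $\sum_\alpha U_{\alpha\alpha i}=(\varphi^{-1}f)_i$ from tracing \eqref{MA}. It is precisely this negative $-2f_i^2/(f\varphi)$ piece that combines with the $f_{ii}/\varphi$ piece coming from $\sum_\alpha U_{\alpha\alpha ii}=(\varphi^{-1}f)_{ii}$ to yield $-f^2(f^{-1})_{ii}/\varphi$, since $f_{ii}-2f_i^2/f=-f^2(f^{-1})_{ii}$.

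Without the quadratic term your computation leaves the second-derivative structure in the form $f_{ij}$ rather than $-f^2(f^{-1})_{ij}$, and these differ by $2f_if_j/f$ --- first-order quantities that are not small, not controlled by $|\nabla\psi|$, and do not cancel against anything else. The stated $\tau$ is built from the Hessian of $f^{-1}$, not $f$, so your argument as written would end up comparing $\Delta\psi$ against the wrong matrix and the proof would not close. The fix is to keep the quadratic term, commute to write $U_{ij\alpha}$ (with $i\in B$, $j=\alpha\in G$) in the form $U_{\alpha\alpha i}$ via \eqref{1ex-0} and \eqref{1ex}, and then run the Cauchy--Schwarz step above; this is the one nontrivial algebraic pivot in the whole lemma and cannot be waved into $C_1|\nabla\psi|+C_2\psi$.
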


\begin{proof}
Following the notation of Caffarelli-Friedman \cite{CF85} and Guan-Ma \cite{GM03}.
For any two functions defined in an open set $O \subset \mathbb{S}^n$, $y \in
O$, we say that $h(y)\lesssim m(y)$ provided there exist positive
constants $c_1$ and $c_2$ such that
\begin{eqnarray}\label{cr-1}
h(y)-m(y)\leq c_1|\nabla \psi(y)|+c_2\psi(y).
\end{eqnarray}
We write $h(y)\sim m(y)$ if $h(y)\lesssim m(y)$ and $h(y)\lesssim m(y)$. Moreover,
we write $h\lesssim m$ if the inequality \eqref{cr-1} holds in $O$, with the constants
$c_1$ and $c_2$ depending only on $|\varphi|_{C^3}$,
$|f|_{C^{2}}$, $n$ and $C_0$ (independent of $y$ and $O$).
Finally, we write $h\sim m$ if $h\lesssim m$ and $h\lesssim m$. We shall show that
\begin{eqnarray}\label{cr-2}
\Delta\psi \lesssim (n-l)\varphi^{-1}f^{2}\sigma_l(U)\tau.
\end{eqnarray}

For any $x \in O$, let $\lambda_1\geq \lambda_2\geq ...\geq
\lambda_n$ be the eigenvalues of $U$ at $x$. Since $\sigma_l(U)\geq
C_0>0$, there exists a positive constant $C>0$ such that
$\lambda_1\geq \lambda_2\geq ...\geq \lambda_l\geq C$. Let
\begin{eqnarray*}
G=\{1, 2, ..., l\} \quad \mbox{and} \quad B=\{l+1, 2, ..., n\}
\end{eqnarray*}
be the ``good" and ``bad" sets of indices respectively. We denote
by $\sigma_k(U|i)$ the $k$-th element symmetric function of $U$ excluding
the $i$-column and $i$-row and $\sigma_k(U|ij)$ the $k$-th element symmetric function of $U$ excluding
the $i, j$-column and $i, j$-row. Let $\Lambda_{G}=(\lambda_1, ..., \lambda_l)$ be
the ``good" eigenvalues of $U$ at $z$; for convenience in notation,
we also write $G=\Lambda_G$ if there is no confusion. In the following, all calculations are
at the point $x$ using the relation ``$\lesssim$", with the understanding that the constants
in \eqref{cr-1} are under control.

For each fixed $x \in O$, we choose a local orthonormal frame $e_1,
..., e_n$ so that $\psi$ is diagonal at $x$, and $U_{ii}=\lambda_i$ for
$i=1, 2, ..., n$. Now we compute $\psi$ and its first and second
derivatives in the direction $e_{\alpha}$.

As $\psi=\sigma_{l+1}(U)=\frac{1}{l+1}\sum_{i=1}^{n}\sigma_{l}(U|i)U_{ii}$, we find that
\begin{eqnarray*}
0\sim \psi(z)\sim \Big(\sum_{i \in B}U_{ii}\Big)\sigma_l(G)\sim
\sum_{i \in B}U_{ii},
\end{eqnarray*}
so
\begin{eqnarray}\label{CR-0}
U_{ii}\sim 0 \quad \mbox{for all} \quad i \in B,
\end{eqnarray}
which yields that, for $1\leq m\leq l$,
\begin{eqnarray}\label{CR-1}
\sigma_m(U)\sim \sigma_m(G), \quad
\sigma_{m}(U|j)\sim
  \begin{cases}
    \displaystyle \sigma_{m}(G|j), & \text{if}\quad j \in G;\\[2.5ex]
    \displaystyle \sigma_{m}(G), & \text{if} \quad j \in B.
  \end{cases}
\end{eqnarray}
\begin{equation}\label{CR-2}
 \sigma_{m}(U|ij)\sim
  \begin{cases}
    \displaystyle \sigma_{m}(G|ij), & \text{if}\quad i, j \in G;\\[2.5ex]
    \displaystyle \sigma_{m}(G|j), & \text{if} \quad i \in B, j \in G;\\[2.5ex]
    \displaystyle \sigma_{m}(G), & \text{if} \quad i,j \in B, i\neq
    j.
  \end{cases}
\end{equation}
Moreover, $\psi_{\alpha}=\sigma_{l+1}^{ij}U_{ij\alpha}
=\sum_{j \in B}\sigma_{l}(U|j)U_{jj\alpha}+\sum_{j \in G}\sigma_{l}(U|j)U_{jj\alpha}$, then \eqref{CR-1} tells us that
\begin{eqnarray}\label{CR-4}
0\sim \psi_{\alpha}\sim \Big(\sum_{i \in
B}U_{ii\alpha}\Big)\sigma_l(G)\sim \sum_{i \in B}U_{ii\alpha}.
\end{eqnarray}

By Proposition \ref{AA}, we have
\begin{eqnarray}\label{CRR-1}
\sigma_{l+1}^{ij}\sim
  \begin{cases}
    \displaystyle \sigma_{l}(G), & \text{if}\quad i=j \in G;\\[2.5ex]
    \displaystyle 0, & \text{otherwise}.
  \end{cases}
\end{eqnarray}

\begin{equation}\label{CRR-2}
 \sigma_{l+1}^{ij, rs}=
  \begin{cases}
    \displaystyle \sigma_{l-1}(U|ir), & \text{if}\quad i=j, r=s, i\neq r;\\[2.5ex]
    \displaystyle -\sigma_{l-1}(U|ij), & \text{if} \quad i\neq j, r=j, s=i;\\[2.5ex]
    \displaystyle 0, & \text{otherwise}.
  \end{cases}
\end{equation}

Now, we will use \eqref{CR-0}-\eqref{CRR-2} to single out the main terms
in the calculation of $\Delta \psi$.
Since
\begin{eqnarray*}
\psi_{\alpha\alpha}=\sigma_{l+1}^{ij,
r s}U_{ij\alpha}U_{rs\alpha}+\sigma_{l+1}^{ij}U_{ij\alpha\alpha},
\end{eqnarray*}
it follows from \eqref{CRR-2} and \eqref{1ex-0} that for any $\alpha
\in \{1, 2, ..., n\}$
\begin{eqnarray}\label{4.11}
\psi_{\alpha\alpha}&=&\sum_{i\neq
j}\sigma_{l-1}(U|i j)U_{ii\alpha}U_{jj\alpha}-\sum_{i\neq
j}\sigma_{l-1}(U|i j)U^{2}_{ij\alpha}+\sum_{i=1}^{n}\sigma_{l+1}^{ii}W_{ii\alpha\alpha}
\nonumber\\&=&\bigg(\sum_{i \in G,
j \in B}+\sum_{i \in B, j \in G}+\sum_{i, j \in B, i\neq j}+\sum_{i,
j \in G, i\neq
j}\bigg)\sigma_{l-1}(U|i j)U_{ii\alpha}U_{jj\alpha}\nonumber\\&&-\bigg(\sum_{i
\in G, j \in B}+\sum_{i \in B, j \in G}+\sum_{i, j \in B, i\neq
j}+\sum_{i, j \in G, i\neq
j}\bigg)\sigma_{l-1}(U|i j)U^{2}_{ij\alpha}\nonumber\\&&+\sum_{i=1}^{n}\sigma_{l+1}^{ii}U_{ii\alpha\alpha}.
\end{eqnarray}
Using \eqref{CR-2} and \eqref{CR-4}, we find
\begin{eqnarray}\label{4.12}
\sum_{i \in B, j \in G}\sigma_{l-1}(U|i j)U_{ii\alpha}U_{jj\alpha}
\sim \bigg(\sum_{j \in G}\sigma_{l-1}(G|j)U_{jj\alpha}\bigg)\sum_{i \in B} U_{ii\alpha}
\sim 0.
\end{eqnarray}
We know from \eqref{CR-4} that
\begin{eqnarray}\label{4.12-1}
-U_{ii \alpha}\sim \sum_{j \in B, j\neq i}U_{jj \alpha} \quad \forall i \in B, \ \forall \alpha.
\end{eqnarray}
Combining \eqref{4.12-1} and \eqref{CR-2} yields
\begin{eqnarray}\label{4.13}
\sum_{i, j\in B, i\neq j}\sigma_{l-1}(U|i j)U_{ii\alpha}U_{jj\alpha}
\sim -\sigma_{l-1}(G)\sum_{i \in B}U^{2}_{ii\alpha}.
\end{eqnarray}
Moreover, \eqref{CR-2} gives
\begin{eqnarray}\label{4.14}
\sum_{i, j\in B, i\neq j}\sigma_{l-1}(U|i j)U_{ii\alpha}U_{jj\alpha}
\sim -\sigma_{l-1}(G)\sum_{i \in B}U^{2}_{ii\alpha}
\end{eqnarray}
Plugging \eqref{4.12}, \eqref{4.13} and \eqref{4.14} into \eqref{4.11}, we obtain
by \eqref{CR-2}
\begin{eqnarray*}
\psi_{\alpha\alpha}\sim\sum_{i=1}^{n}\sigma_{l+1}^{ii}U_{ii\alpha\alpha}-2\sum_{i
\in B, j \in
G}\sigma_{l-1}(G|j)U^{2}_{ij\alpha}-\sigma_{l-1}(G)\sum_{i, j \in
B}U^{2}_{ij\alpha}.
\end{eqnarray*}
Summing $\psi_{\alpha\alpha}$ with $\alpha$ from $1$ to $n$, it yields
\begin{eqnarray}\label{MM-1}
\Delta\psi&\sim&\sum_{i=1}^{n}\sigma_{l+1}^{ii}\sum_{\alpha=1}^{n}U_{ii\alpha\alpha}-2\sum_{i
\in B, j \in
G}\sigma_{l-1}(G|j)\sum_{\alpha=1}^{n}U^{2}_{ij\alpha}\nonumber\\&&-\sigma_{l-1}(G)\sum_{i,
j \in B}\sum_{\alpha=1}^{n}U^{2}_{ij\alpha}.
\end{eqnarray}

As $\psi=\sigma_{l+1}(U)=\frac{1}{l+1}\sum_{i=1}^{n}\sigma_{l+1}^{ii}U_{ii}$ and
$\psi_{k}=\sigma_{l+1}^{ij}U_{ijk}$, we find that for each fixed $i$
\begin{eqnarray}\label{MM-2}
0\sim \psi\sim \sigma_{l+1}^{ii}U_{ii}.
\end{eqnarray}
\begin{eqnarray}\label{MM-3}
0\sim \psi_{k}=\sum_{i=1}^{n}\sigma_{l+1}^{ii}U_{iik}.
\end{eqnarray}
Inserting \eqref{MM-2} and \eqref{MM-3} into \eqref{2ex}, we obtain
\begin{eqnarray*}
&&\sigma_{l+1}^{ii}\sum_{\alpha=1}^{n}(U_{ii\alpha\alpha}-U_{\alpha\alpha i i})
\\&\sim &-\frac{\sum_{i=1}^{n}\sigma_{l+1}^{ii}\sum_{\alpha, k=1}^{n}U_{\alpha\alpha k}\varphi_k}{\varphi}
\\&&-\sum_{i, \alpha=1}^{n}\sigma_{l+1}^{ii}\bigg[-\frac{2\varphi_{\alpha}^{2}}{\varphi^2}+\frac{|D \varphi|^2}{2\varphi^2}+\frac{1}{\varphi}U_{\alpha\alpha}+\frac{1}{2}\Big(1+\frac{1}{\varphi^2}\Big)\bigg]U_{\alpha\alpha}
\\&\sim &-\frac{(n-l)\sigma_l(G)\sum_{k=1}^{n}(\varphi^{-1}f)_{k}\varphi_k}{\varphi}
\\&&-(n-l)\sigma_l(G)\sum_{\alpha=1}^{n}\bigg[-\frac{2\varphi_{\alpha}^{2}}{\varphi^2}+\frac{|D \varphi|^2}{2\varphi^2}+\frac{1}{\varphi}U_{\alpha\alpha}+\frac{1}{2}\Big(1+\frac{1}{\varphi^2}\Big)\bigg]U_{\alpha\alpha}.
\end{eqnarray*}
Moreover, we have by the equation \eqref{MA}
\begin{eqnarray*}
\sum_{i=1}^{n}\sigma_{l+1}^{ii}\sum_{\alpha=1}^{n}U_{\alpha\alpha i i}
=\sum_{i=1}^{n}\sigma_{l+1}^{ii}(\varphi^{-1}f)_{i i}
=\sigma_l(G)\sum_{i \in B}(\varphi^{-1}f)_{i i}.
\end{eqnarray*}
Then, it follows that
\begin{eqnarray}\label{MC-1}
&&\sigma_{l+1}^{ii}\sum_{\alpha}U_{ii\alpha\alpha}
\nonumber\\ \nonumber&\sim &\sigma_l(G)\sum_{i \in B}(\varphi^{-1}f)_{i i}-\frac{(n-l)\sigma_l(G)\sum_{k=1}^{n}(\varphi^{-1}f)_{k}\varphi_k}{\varphi}
\\&&-(n-l)\sigma_l(G)\sum_{\alpha=1}^{n}\bigg[-\frac{2\varphi_{\alpha}^{2}}{\varphi^2}+\frac{|D \varphi|^2}{2\varphi^2}+\frac{1}{\varphi}U_{\alpha\alpha}+\frac{1}{2}\Big(1+\frac{1}{\varphi^2}\Big)\bigg]U_{\alpha\alpha}.
\end{eqnarray}
Taking \eqref{MM-1} into \eqref{MC-1}, we find
\begin{eqnarray}\label{MC-2}
\Delta\psi&\sim&\sigma_l(G)\sum_{i \in B}(\varphi^{-1}f)_{i i}
-\frac{(n-l)\sigma_l(G)\sum_{k=1}^{n}(\varphi^{-1}f)_{k}\varphi_k}{\varphi}
\nonumber\\ \nonumber&&-(n-l)\sigma_l(G)\sum_{\alpha=1}^{n}\bigg[-\frac{2\varphi_{\alpha}^{2}}{\varphi^2}+\frac{|D \varphi|^2}{2\varphi^2}+\frac{1}{\varphi}U_{\alpha\alpha}+\frac{1}{2}\Big(1+\frac{1}{\varphi^2}\Big)\bigg]U_{\alpha\alpha}\\&&-2\sum_{i
\in B, j \in
G}\sigma_{l-1}(G|j)\sum_{\alpha=1}^{n}U^{2}_{ij\alpha}-\sigma_{l-1}(G)\sum_{i,
j \in B}\sum_{\alpha=1}^{n}U^{2}_{ij\alpha}.
\end{eqnarray}
Now, we deal with the fourth term on the right hand side of \eqref{MC-2}. Using \eqref{1ex-0} and \eqref{1ex}, we obtain
\begin{eqnarray}\label{MC-3}
&&-2\sum_{\alpha=1}^{n}\sum_{i
\in B, j \in
G}\sigma_{l-1}(G|j)U^{2}_{ij\alpha}\nonumber\\&\leq&-2\sum_{i
\in B, \alpha \in
G}\sigma_{l-1}(G|\alpha)U^{2}_{i\alpha\alpha}-2\sum_{i
\in B, \alpha \in
G}\sigma_{l-1}(G|\alpha)U^{2}_{i\alpha i}\nonumber\\&\lesssim&
-2\sum_{i
\in B}\sum_{\alpha \in
G}\sigma_{l-1}(G|\alpha)U^{2}_{i\alpha\alpha}-2\sum_{i
\in B, \alpha \in
G}\sigma_{l-1}(G|\alpha)U^{2}_{\alpha\alpha }\frac{\varphi_{\alpha}^{2}}{\varphi^2}\nonumber\\&\lesssim&
-2\sum_{i
\in B}\sum_{\alpha \in
G}\sigma_{l-1}(G|\alpha)U^{2}_{\alpha\alpha i}-2(n-l)\sigma_l(G)\sum_{\alpha \in
G}U_{\alpha\alpha }\frac{\varphi_{\alpha}^{2}}{\varphi^2},
\end{eqnarray}
where we used the following relation
\begin{eqnarray*}
U_{i\alpha\alpha}=U_{\alpha\alpha i}+\frac{\varphi_i}{\varphi}U_{ii}\sim U_{\alpha\alpha i}, \quad i \in B, \alpha \in G
\end{eqnarray*}
to get the last inequality. Since
\begin{eqnarray*}
&&\sum_{\alpha \in
G}\sigma_{l-1}(G|\alpha)U^{2}_{\alpha\alpha i}
-\sigma_{l}(G)\frac{(\varphi^{-1}f)_{i}^{2}}{\varphi^{-1}f}
\\&=&\sum_{\alpha \in
G}\sigma_{l-1}(G|\alpha)U^{2}_{\alpha\alpha i}
-\sigma_{l}(G)\frac{(\sum_{\alpha=1}^{n}U_{\alpha\alpha i})^{2}}{\varphi^{-1}f}
\\&\sim& \sum_{\alpha \in
G}\sigma_{l-1}(G|\alpha)U^{2}_{\alpha\alpha i}
-\frac{\Big[\sum_{\alpha \in G}\sigma_{l-1}^{\frac{1}{2}}(G|\alpha)U^{\frac{1}{2}}_{\alpha\alpha}U_{\alpha\alpha i}
\Big]^{2}}{\varphi^{-1}f}
\\&\geq& \sum_{\alpha \in
G}\sigma_{l-1}(G|\alpha)U^{2}_{\alpha\alpha i}
-\frac{\sum_{\alpha \in G}\sigma_{l-1}(G|\alpha)U^{2}_{\alpha\alpha i}\sum_{\beta \in G}U_{\beta\beta}
}{\varphi^{-1}f}
\\&\sim& 0,
\end{eqnarray*}
the inequality \eqref{MC-3} becomes
\begin{eqnarray*}
&&-2\sum_{\alpha=1}^{n}\sum_{i
\in B, j \in
G}\sigma_{l-1}(G|j)U^{2}_{ij\alpha}
\\&\lesssim&
-2\sum_{i
\in B}\sigma_{l}(G)\frac{(\varphi^{-1}f)_{i}^{2}}{\varphi^{-1}f}-2(n-l)\sigma_l(G)\sum_{\alpha \in
G}U_{\alpha\alpha }\frac{\varphi_{\alpha}^{2}}{\varphi^2}.
\end{eqnarray*}
Putting the above inequality into \eqref{MC-2}, it yields
\begin{eqnarray*}
\Delta\psi&\lesssim&\sigma_l(G)\sum_{i \in B}(\varphi^{-1}f)_{i i}-\frac{(n-l)\sigma_l(G)\sum_{k=1}^{n}(\varphi^{-1}f)_{k}\varphi_k}{\varphi}
\\&&-(n-l)\sigma_l(G)\sum_{\alpha=1}^{n}\bigg[\frac{|D \varphi|^2}{2\varphi^2}+\frac{1}{\varphi}U_{\alpha\alpha}+\frac{1}{2}\Big(1+\frac{1}{\varphi^2}\Big)\bigg]U_{\alpha\alpha}\\&&-2\sum_{i
\in B}\sigma_{l}(G)\frac{(\varphi^{-1}f)_{i}^{2}}{\varphi^{-1}f}.
\end{eqnarray*}
Note that
\begin{eqnarray*}
\varphi^{-1}f[(\varphi^{-1}f)_{i}]^2=\Big(-\frac{\varphi_i f}{\varphi^2}+\frac{f_i}{\varphi}\Big)^2
=\frac{\varphi_{i}^{2} f}{\varphi^2}+\frac{f_{i}^{2}}{f \varphi}-\frac{2\varphi_if_{i}}{\varphi^2},
\end{eqnarray*}
\begin{eqnarray*}
(\varphi^{-1}f)_{ii}&=&-\frac{\varphi_{ii}}{\varphi}\varphi^{-1}f+\frac{2\varphi_i\varphi_if}{\varphi^3}
+\frac{f_{ii}}{\varphi}-\frac{2\varphi_i f_i}{\varphi^2}
\\&\sim& -\frac{|D \varphi|^2}{2\varphi^2}\varphi^{-1}f+\frac{1}{2}(1-\frac{1}{\varphi^2})\varphi^{-1}f
+\frac{2\varphi^{2}_{i}}{\varphi^2}\varphi^{-1}f+\frac{f_{ii}}{\varphi}-\frac{2\varphi_i f_i}{\varphi^2},
\end{eqnarray*}
and
\begin{eqnarray*}
-\frac{\sum_{k=1}^{n}(\varphi^{-1}f)_{k}\varphi_k}{\varphi}=\frac{f|D\varphi|^2}{\varphi^3}
-\frac{\sum_{k=1}^{n}f_{k}\varphi_k}{\varphi^2}.
\end{eqnarray*}
Thus,
\begin{eqnarray*}
\Delta\psi&\lesssim&\sigma_l(G)\sum_{i \in B}\Big(\frac{f_{ii}}{\varphi}-\frac{2f_i\varphi_i}{\varphi^2}\Big) -\frac{(n-l)\sigma_l(G)\sum_{k=1}^{n}f_{k}\varphi_k}{\varphi^2}
\\&&-2\sum_{i \in B}\sigma_l(G)\Big(\frac{f_{i}^{2}}{f \varphi}-\frac{2\varphi_if_{i}}{\varphi^2}\Big)-(n-l)\sigma_l(G)\frac{f}{\varphi^3}
\\&\lesssim&\frac{\sigma_l(G)}{\varphi}\sum_{i \in B}\bigg(f_{ii}-\frac{\sum_{\alpha \in G}f_{\alpha}\varphi_\alpha-\sum_{k \in B}f_{k}\varphi_k}{\varphi}
-2\frac{f_{i}^{2}}{f}-\frac{f}{\varphi^2}\bigg)\\&\lesssim&\frac{\sigma_l(G)}{\varphi}\sum_{i \in B}\bigg(f_{ii}+|D f|
-2\frac{f_{i}^{2}}{f}-\frac{f}{\frac{8}{n}\max_{\mathbb{S}^n}f+2}\bigg)\\&\lesssim&-f^2\frac{\sigma_l(G)}{\varphi}\sum_{i \in B}
\bigg((f^{-1})_{ii}-|D f^{-1}|
+\frac{f^{-1}}{\frac{8}{n}\max_{\mathbb{S}^n}f+2}\bigg),
\end{eqnarray*}
where we used the inequalities \eqref{C0} and \eqref{r-u} to get the last but one inequality.
This complete the proof.
\end{proof}

Now, we begin to prove Theorem \ref{FRT}.

\begin{proof}
If $\varphi$ is not uniformly $h$-convex at some point $x_0$, then
there is an integer $l$ with $1\leq l\leq n-1$ such that $\sigma_l(U[\varphi])>0$
for any $x \in \mathbb{S}^n$ and $\psi(x_0)=\sigma_{l+1}(U[\varphi(x_0)])=0$.
Using Lemma \ref{DL}, we have
\begin{eqnarray*}
\Delta\psi(x)\leq C_1|\nabla \psi(x)|+C_2\psi(x).
\end{eqnarray*}
The strong minimum principle implies $\psi=\sigma_{l+1}(U[\varphi])\equiv 0$. Then,
we have $\varphi\equiv 1$ by \eqref{DE}. This is a contradiction to \eqref{MA}.
\end{proof}

\section{The proof of the main theorem}

In this section, we use the degree theory for nonlinear elliptic
equations developed in \cite{Li89} to prove Theorem \ref{Main}.
For the use of the degree theory, the uniqueness of constant solutions to the equation
\eqref{MA} is important for us.

\begin{lemma}\label{U-C}
The $h$-convex solutions to the equation
\begin{eqnarray}\label{MA-c}
\Delta\varphi-\frac{n}{2}\frac{|D\varphi|^2}{\varphi}+\frac{n}{2}\Big(\varphi-\frac{1}{\varphi}\Big)=\varphi^{-1}\gamma
\end{eqnarray}
with $\varphi>1$ are given by
\begin{eqnarray*}
\varphi(x)=\Big(1+\frac{2}{n}\gamma\Big)^{\frac{1}{2}}\Big(\sqrt{|x_0|^2+1}-\langle x_0, x\rangle\Big),
\end{eqnarray*}
where $x_0 \in \mathbb{R}^{n+1}$. In particular, $\varphi(x)=\Big(1+\frac{2}{n}\gamma\Big)^{\frac{1}{2}}$ is the unique even solution.
\end{lemma}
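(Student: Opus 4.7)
The plan is to classify all $h$-convex solutions in two stages: first use the integral identity \eqref{DE} together with the Newton--Maclaurin inequality to show that $U[\varphi]$ is pointwise a scalar multiple of the identity, then exploit the standard classification of functions on $\mathbb{S}^n$ with vanishing traceless Hessian.

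Applying \eqref{DE} at $k=0$, together with $\sigma_1(U[\varphi])=\gamma/\varphi$ (which is just equation \eqref{MA-c}), yields
\[
\int_{\mathbb{S}^n}\!\Big[\tfrac{|D\varphi|^2}{2\varphi}+\tfrac{1}{2}(\varphi-\varphi^{-1})\Big]\varphi^{-n}\,d\sigma_{\mathbb{S}^n} = \tfrac{\gamma}{n}\int_{\mathbb{S}^n}\varphi^{-n-1}\,d\sigma_{\mathbb{S}^n}.
\]
A second application of \eqref{DE} at $k=1$, using $\sigma_1(U)=\gamma/\varphi$ once more and substituting the previous display, produces
\[
\int_{\mathbb{S}^n}\varphi^{1-n}\sigma_2(U[\varphi])\,d\sigma_{\mathbb{S}^n} = \tfrac{(n-1)\gamma^2}{2n}\int_{\mathbb{S}^n}\varphi^{-n-1}\,d\sigma_{\mathbb{S}^n}.
\]
Since $h$-convexity makes $U[\varphi]$ positive semi-definite, Newton--Maclaurin gives the pointwise bound $\sigma_2(U)\leq\tfrac{n-1}{2n}\sigma_1(U)^2=\tfrac{(n-1)\gamma^2}{2n\varphi^2}$; multiplying by $\varphi^{1-n}$ and integrating reproduces exactly the right-hand side above. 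Therefore Newton--Maclaurin must hold with pointwise equality, and its equality case forces $U[\varphi]=\tfrac{\gamma}{n\varphi}I$ throughout $\mathbb{S}^n$.

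Once umbilicity is in hand, the defining formula of $U[\varphi]$ rearranges to $D^2\varphi=\mu(x)\,g_{\mathbb{S}^n}$ for some scalar $\mu$, so the traceless Hessian of $\varphi$ vanishes identically. The kernel of this operator on $\mathbb{S}^n$ is spanned by constants and the restrictions of the ambient coordinate functions $x^1,\ldots,x^{n+1}$, so $\varphi(x)=a+\langle z,x\rangle$ for some $a\in\mathbb{R}$ and $z\in\mathbb{R}^{n+1}$. Substituting this ansatz back into $\sigma_1(U[\varphi])=\gamma/\varphi$ and clearing denominators reduces everything to the single algebraic constraint $a^2-|z|^2=1+\tfrac{2\gamma}{n}$. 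Reparametrising by $\alpha:=\sqrt{1+2\gamma/n}$ and $x_0:=-z/\alpha$ rewrites this as $a=\alpha\sqrt{|x_0|^2+1}$, $z=-\alpha x_0$, which is exactly the claimed family $\varphi(x)=\alpha\bigl(\sqrt{|x_0|^2+1}-\langle x_0,x\rangle\bigr)$. The uniqueness of the even solution is then immediate: evenness of $\varphi$ forces $\langle z,x\rangle$ to be an even function of $x$, hence $z=0$ and $\varphi\equiv\alpha$.

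The main technical delicacy is that the derivation of \eqref{DE} given in Section~2 invokes $\varphi U[\varphi]=(\mathcal{W}-I)^{-1}$, which requires $U[\varphi]$ to be strictly positive definite (i.e.\ uniform $h$-convexity rather than the bare $h$-convexity assumed in the statement). For merely $h$-convex solutions this can be bypassed by an approximation argument---perturb $\varphi$ slightly to obtain a uniformly $h$-convex neighbour, apply \eqref{DE}, and pass to the limit---after which the calculation above goes through and yields the full classification.
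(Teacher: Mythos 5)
The paper itself gives no self-contained proof here; it cites Proposition 8.1 of \cite{Li-Xu}. Your argument therefore fills a real gap, and the strategy you choose (Minkowski-type identity \eqref{DE} at two consecutive levels, Newton--Maclaurin to force umbilicity, then Obata-type rigidity for the trace-free Hessian on $\mathbb{S}^n$) is exactly the kind of argument one expects behind such a classification, so it is a legitimate and essentially correct replacement for the missing proof. Two technical points deserve attention.

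First, your two displayed integral identities do not match \eqref{DE} as printed in the paper: the paper writes the coefficient $C_n^k/C_n^{k+1}$, which at $k=0$ would give $\gamma\int\varphi^{-n-1}=\tfrac{1}{n}\int\langle V,\nu\rangle\varphi^{-n}$, i.e.\ $\int\langle V,\nu\rangle\varphi^{-n}=n\gamma\int\varphi^{-n-1}$, whereas you wrote $\tfrac{\gamma}{n}\int\varphi^{-n-1}$. Re-deriving \eqref{DE} from \eqref{QQ} and \eqref{SS-4}, the coefficient should in fact be $C_n^{k+1}/C_n^k$ (the paper has the fraction inverted), and with the corrected coefficient your two displays are right and the Newton--Maclaurin step closes exactly: equality in $\sigma_2(U)\le\tfrac{n-1}{2n}\sigma_1(U)^2$ forces $U=\tfrac{\gamma}{n\varphi}I$. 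You should flag that you are using the corrected constant, since with the coefficient as printed the inequality would point the wrong way and the argument would collapse.

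Second, the closing approximation remark is stated too loosely. You cannot ``perturb $\varphi$ slightly to a uniformly $h$-convex neighbour'' and keep it a solution of the nonlinear equation; what you actually need (and all you need) is that \eqref{DE} itself, as an integral identity, holds for merely $h$-convex support functions. A concrete way to get it: from the definition of $U$ one checks
\begin{equation*}
U[c\varphi]=c\,U[\varphi]+\frac{c^2-1}{2c\varphi}\,I ,
\end{equation*}
so $\varphi_\varepsilon:=(1+\varepsilon)\varphi$ is uniformly $h$-convex whenever $U[\varphi]\ge 0$ and $\varphi>0$; apply \eqref{DE} to $\varphi_\varepsilon$ (a genuine uniformly $h$-convex body, not a solution of \eqref{MA-c}) and let $\varepsilon\to 0$. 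After that your computation only invokes the equation $\sigma_1(U)=\gamma/\varphi$ for $\varphi$ itself, so everything goes through. The rest of the proof --- the passage from $D^2\varphi=\mu\,g_{\mathbb{S}^n}$ to $\varphi=a+\langle z,x\rangle$ via the kernel of the trace-free Hessian, the algebraic reduction $a^2-|z|^2=1+\tfrac{2\gamma}{n}$, and the reparametrisation by $\alpha$ and $x_0$ --- is correct, and the evenness argument for uniqueness is immediate.
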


\begin{proof}
This lemma is a corollary of Proposition 8.1 in \cite{Li-Xu}, its proof is similar to that of Theorem 8.1 (7).
\end{proof}

We define
\begin{eqnarray*}
\mathcal{B}^{2,\alpha}(\mathbb{S}^n)=\{\varphi \in
C^{2,\alpha}(\mathbb{S}^n): \varphi \ \mbox{is even}\}
\end{eqnarray*}
and
\begin{eqnarray*}
\mathcal{B}_{0}^{4,\alpha}(\mathbb{S}^n)=\{\varphi \in
C^{4,\alpha}(\mathbb{S}^n): U[\varphi]>0 \ \mbox{and} \ \varphi \ \mbox{is even}\}.
\end{eqnarray*}
Let us consider $$\mathcal{L}(\cdot, t): \mathcal{B}_{0}^{4,\alpha}(\mathbb{S}^n)\rightarrow
\mathcal{B}^{2,\alpha}(\mathbb{S}^n),$$ which is defined by
\begin{eqnarray*}
\mathcal{L}(\varphi, t)=\Delta\varphi-\frac{n}{2}\frac{|D\varphi|^2}{\varphi}+\frac{n}{2}
\Big(\varphi-\frac{1}{\varphi}\Big)-\varphi^{-1} f_t,
\end{eqnarray*}
where
\begin{eqnarray}\label{ft}
f_t=[(1-t)(\max_{\mathbb{S}^n}f)^{-1}+t f^{-1}]^{-1}.
\end{eqnarray}

Let $$\mathcal{O}_R=\{\varphi \in \mathcal{B}_{0}^{4,\alpha}(\mathbb{S}^n):
1+\frac{1}{R}< \varphi, \ 0< U[\varphi], \ |\varphi|_{C^{4,\alpha}(\mathbb{S}^n)}<R\},$$ which clearly is an open
set of $\mathcal{B}_{0}^{4,\alpha}(\mathbb{S}^n)$. Moreover, if $R$ is
sufficiently large, $\mathcal{L}(\varphi, t)=0$ has no solution on $\partial
\mathcal{O}_R$ by Full Rank Theorem \ref{FRT}, and the a priori estimates \eqref{C0} and \eqref{C2+}.
Otherwise, we can find a family of solutions $\{\varphi_i(\cdot, t_i)\} \subset\mathcal{O}_R$
satisfying $\mathcal{L}(\varphi, t)=0$ and $x_i \in \mathbb{S}^n$ satisfying
$\det (U[\varphi_i(x_i, t_i)])\rightarrow 0$ as $i\rightarrow +\infty$. Then, there is a
subsequences $\{\varphi_{i_m}\}$ and $x_{i_m}$ satisfying $\varphi_{i_m}(\cdot, t_{i_m})\rightarrow \varphi$
in $C^{4, \alpha}(\mathbb{S}^n)$, $t_{i_m}\rightarrow t_0$ and $x_{i_m}\rightarrow x_0$ as $m\rightarrow +\infty$ .
Moreover, $\varphi$ is a $h$-convex solution to $\mathcal{L}(\varphi, t_0)=0$
satisfying $\det(U[\varphi(x_0)])=0$ and \eqref{DE}.
On the other hand, it is easy to check that $f_t$ satisfies
Conditions (1) and (2) in Theorem \ref{Main}, this is a contradiction due to Full Rank Theorem \ref{FRT}.

Therefore the degree $\deg(\mathcal{L}(\cdot, t), \mathcal{O}_R, 0)$ is
well-defined for $0\leq t\leq 1$. Using the homotopic invariance of
the degree (Proposition 2.2 in \cite{Li89}), we have
\begin{eqnarray}\label{hot}
\deg(\mathcal{L}(\cdot, 1), \mathcal{O}_R, 0)=\deg(\mathcal{L}(\cdot, 0), \mathcal{O}_R, 0).
\end{eqnarray}

Lemma \ref{U-C} tells us that
$\varphi=c$ is the unique even solution for $\mathcal{L}(\varphi, 0)=0$ in $\mathcal{O}_R$.
Direct calculation show that the linearized operator of $\mathcal{L}$ at
$\varphi=c$ is
\begin{eqnarray*}
L_{c}(\overline{\phi})=(\Delta_{\mathbb{S}^n}+n)\overline{\phi}.
\end{eqnarray*}
Since
$\Delta_{\mathbb{S}^n}\overline{\phi}+n\overline{\phi}=0$
has the unique even solution $\overline{\phi}=0$,
$L_{c}$ is an invertible operator. So, we have by Proposition
2.3 in \cite{Li89}
\begin{eqnarray*}
\deg(\mathcal{L}(\cdot, 0), \mathcal{O}_R, 0)=\deg(L_{c_0}, \mathcal{O}_R, 0).
\end{eqnarray*}
Because
the eigenvalues of the Beltrami-Laplace operator $\Delta$ on $\mathbb{S}^n$ are strictly less than
$-n$ except for the first two eigenvalues $0$ and $-n$,
there is only one positive eigenvalue $n$ of $L_{c}$
with multiplicity $1$.
Then we have by Proposition
2.4 in \cite{Li89}
\begin{eqnarray*}
\deg(\mathcal{L}(\cdot, 0), \mathcal{O}_R, 0)=\deg(L_{c_0}, \mathcal{O}_R, 0)=-
1.
\end{eqnarray*}
Therefore, it follows from \eqref{hot}
\begin{eqnarray*}
\deg(\mathcal{L}(\cdot, 1), \mathcal{O}_R; 0)=\deg(\mathcal{L}(\cdot, 0), \mathcal{O}_R, 0)=-
1.
\end{eqnarray*}
So, we obtain a solution at $t=1$. This completes the proof of
Theorem \ref{Main}.

\bigskip

\bigskip

\end{document}